\numberwithin{equation}{section}
\numberwithin{figure}{section}
\theoremstyle{plain}
\newtheorem{thm}{\protect\theoremname}
\theoremstyle{definition}
\theoremstyle{remark}
\newtheorem{rem}[thm]{\protect\remarkname}
\theoremstyle{plain}
\newtheorem{lem}{Lemma}
\theoremstyle{plain}
\newtheorem{cor}{Corollary}
\theoremstyle{plain}
\newtheorem{prop}[thm]{\protect\propositionname}
\providecommand{\definitionname}{Definition}
\providecommand{\propositionname}{Proposition}
\providecommand{\remarkname}{Remark}
\providecommand{\theoremname}{Theorem}
\begin{document}
\title{\textsf{Discrete-Time Dynamical Systems Generated by a Quadratic Operator}}
\maketitle
\begin{center}
\textsc{S.K. Shoyimardonov, U.A. Rozikov}
\par\end{center}
\begin{abstract}
In this paper, we examine a specific class of quadratic operators. For these operators, we identified all fixed points and categorized their types in the general case. Our analysis revealed that there are no attractive fixed points except the origin. Additionally, we investigated the global dynamics in the two-dimensional case and generalized several results obtained for lower-dimensional scenarios.
\end{abstract}

\textbf{\textsc{\scriptsize{}Mathematics Subject Classifications
(2010).}}\textsc{\scriptsize{} 34D20 (92D25).}{\scriptsize\par}

\textbf{\textsc{\scriptsize{}{Key words.}}}\textsc{\scriptsize{}
 Quadratic operator, fixed point, invariant set, invariant manifold, stable curve, unstable line.}{\scriptsize\par}

\section{Introduction}

In this paper we study dynamical systems generated by the following quadratic operator:

\begin{equation}
H: x_{k}' =\frac{\theta_kx_k}{2}\left(x_k+2\sum_{\substack{i=1
		 \\ i\neq k}}^{n}x_{i}\right),\quad k=1,\ldots,n.\medskip\\
\label{h2}
\end{equation}
where $\theta_k>0, \forall k=1,2,...,n$ and $H: \mathbb{R}_{+}^n\rightarrow \mathbb{R}_{+}^n.$
This operator has applications in several fields due to its non-linear,
interactive structure. Here are a few potential areas where such an operator can be applied:

1. Sequential hermaphroditism: it is a phenomenon observed in many fish species, where an individual changes sex at some point during its life. In populations exhibiting sequential hermaphroditism, the dynamics can be represented using gonosomal algebra, with $\theta_k$  denoting the rate of sexual inversion. If $\theta_k=1/2, \ \ \forall k=1,2,...,n$, then the model under consideration (as described in equation (\ref{h2})) aligns with the sequential hermaphroditism model (see \cite{Gem}, \cite{RV-15}).

2. Population dynamics and genetics: the operator can describe how the population proportions of different species or genotypes evolve over time. Each \(x_k\) could represent the population size of species \(k\), and the term involving \(\sum_{i \neq k} x_i\) can model interactions or competition between species.
Specifically, the structure is reminiscent of quadratic stochastic operators (QSO), which are used in models of genetic inheritance. These operators describe how the frequencies of genotypes in a population evolve under inheritance laws (see \cite{Lyub-92}, \cite{Rpd}) .

3. Economic systems:  In economic models, \(x_k\) could represent the wealth, output, or capital of sector \(k\), and the summation over other sectors might represent interactions such as investments, production exchanges, or competition between different economic sectors. The operator can be used to model non-linear growth in a system of interdependent sectors where changes in one sector affect others (see \cite{Go}).

4. Statistical physics:  the operator could describe the evolution of a system of interacting particles, where \(x_k\) represents some physical quantity such as energy, spin, or concentration of species in a multi-component system. The non-linear term involving the sum of the other components reflects the interaction between different particles or spins, similar to mean-field theory approaches in spin systems like the Ising or Potts models (see \cite{St}).

5. Dynamical systems and control theory: The operator fits into the realm of non-linear dynamical systems and could be applied to model the evolution of systems where each component interacts with others in a non-linear fashion. It could be useful in understanding equilibria and stability in such systems (\cite{Wi}).

In control theory, it could be used to design control systems where multiple subsystems interact non-linearly, and their collective behavior needs to be regulated.

 6. Ecological Systems: the operator can describe interactions among different species in an ecosystem. The term \(\sum_{i \neq k} x_i\) might represent the combined effect of all other species on species \(k\), where competition, symbiosis, or predation influences growth or decline. This could be used to model predator-prey systems or competitive ecosystems where species populations are interdependent (\cite{Ma}).

 7. Epidemiology:  The operator can also be applied to epidemiological models where \(x_k\) represents the proportion of individuals in a certain state (e.g., susceptible, infected, recovered), and the interaction terms model the transmission dynamics. The sum \(\sum_{i \neq k} x_i\) can represent how individuals in other states influence the spread of disease (\cite{He}, \cite{Mu}).

The paper is organized as follows: In Section 2, we identify all fixed points of the operator (\ref{h2}) and establish conditions for the parameters that determine the types of these fixed points. We prove that one eigenvalue of the Jacobian matrix at the positive fixed point is always equal to 2. In Section 3, we investigate the global dynamics of the operator in the two-dimensional case and generalize some of our findings. In Section 4, we present results concerning invariant curves for the two-dimensional case and generalize our conclusions about the unstable line associated with the positive fixed point. To illustrate our results, we provide some examples.

\section{Fixed points}

Let us find fixed points of $H$, i.e. we solve the following system of
equations
\begin{equation}
\frac{\theta_kx_k}{2}\left(x_k+2\sum_{\substack{i=1 \\ i\neq k}}^{n}x_{i}\right)=x_k,\quad k=1,\ldots,n.\medskip\\
\label{h4}
\end{equation}

Obviously, $x_k=0$ is a solution of the system (\ref{h4}), so $( \underbrace{0,0,...,0}_{n})$ is a fixed point.

Suppose, $x_k\neq0.$ Then we obtain the following system of linear equations:

\begin{equation}
x_k+2\sum_{\substack{i=1 \\ i\neq k}}^{n}x_{i}=\frac{2}{\theta_k},\quad k=1,\ldots,n.\medskip\\
\label{h5}
\end{equation}

\begin{lem}\label{lema} The solution to the equation (\ref{h5}) is given by the following formula:

\begin{equation}
\overline{x}_i=\frac{4\theta_i\sum_{\substack{1\leq i_1<i_2<...<i_{n-2}\leq n \\ i_{r}\neq i}}\theta_{i_1}\theta_{i_2}\ldots\theta_{i_{n-2}}- (4n-6)\prod_{\substack{j=1 \\ j\neq i}}^{n}\theta_j}{(2n-1)\prod_{j=1}^{n}\theta_j}.\\
\label{h6}
\end{equation}

\end{lem}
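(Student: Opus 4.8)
The plan is to exploit the near-symmetry of \eqref{h5}: each of the $n$ equations involves the same full sum $S:=x_1+\cdots+x_n$, only the coefficient of the diagonal term $x_k$ differing from that of the others. First I would rewrite the $k$-th equation as $2S-x_k=2/\theta_k$, i.e.
\[
x_k=2S-\frac{2}{\theta_k},\qquad k=1,\ldots,n.
\]
Summing this over $k$ gives $S=2nS-2\sum_{k=1}^n\theta_k^{-1}$, and since $2n-1>0$,
\[
S=\frac{2}{2n-1}\sum_{k=1}^n\frac{1}{\theta_k}.
\]
Substituting this back into the relation $x_i=2S-2/\theta_i$ yields the closed form
\[
\overline x_i=\frac{4}{2n-1}\sum_{k=1}^n\frac{1}{\theta_k}-\frac{2}{\theta_i},
\]
so that it only remains to put this expression over the common denominator $(2n-1)\prod_{j=1}^n\theta_j$ and to identify the numerator with the one in \eqref{h6}.

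Multiplying through by $\prod_{j=1}^n\theta_j$ and using $\theta_k^{-1}\prod_{j=1}^n\theta_j=\prod_{j\neq k}\theta_j$, the numerator of $\overline x_i$ becomes
\[
4\sum_{k=1}^n\ \prod_{\substack{j=1\\ j\neq k}}^n\theta_j\ -\ 2(2n-1)\prod_{\substack{j=1\\ j\neq i}}^n\theta_j .
\]
The single genuinely combinatorial step is then to separate the sum over $k$ into the term $k=i$ and the remaining terms $k\neq i$, and for each $k\neq i$ to factor $\prod_{j\neq k}\theta_j=\theta_i\prod_{j\neq i,\,k}\theta_j$. As $k$ ranges over the $n-1$ indices different from $i$, the product $\prod_{j\neq i,\,k}\theta_j$ runs exactly once through all $(n-2)$-fold products of the variables $\{\theta_j:j\neq i\}$ (the one obtained by deleting $\theta_k$ from that list), so
\[
\sum_{k=1}^n\ \prod_{\substack{j=1\\ j\neq k}}^n\theta_j
=\prod_{\substack{j=1\\ j\neq i}}^n\theta_j
+\theta_i\!\!\sum_{\substack{1\le i_1<\cdots<i_{n-2}\le n\\ i_r\neq i}}\!\!\theta_{i_1}\theta_{i_2}\cdots\theta_{i_{n-2}} .
\]
Inserting this into the numerator produces $4\theta_i\sum_{i_r\neq i}\theta_{i_1}\cdots\theta_{i_{n-2}}+\bigl(4-2(2n-1)\bigr)\prod_{j\neq i}\theta_j$, and since $4-2(2n-1)=-(4n-6)$ this is precisely the numerator in \eqref{h6}.

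I do not anticipate any serious obstacle: system \eqref{h5} is a rank-one perturbation of a scalar multiple of the identity and is solved outright by the substitution $x_k=2S-2/\theta_k$. The only point requiring a little care is the recognition that $\sum_{k\neq i}\prod_{j\neq i,\,k}\theta_j$ is exactly the degree-$(n-2)$ elementary symmetric polynomial in $\{\theta_j:j\neq i\}$ that appears in \eqref{h6}. One should also note in passing that $2n-1\neq 0$ and $\prod_{j=1}^n\theta_j\neq 0$, both immediate from the hypothesis $\theta_j>0$, so that the divisions above are legitimate and the displayed solution is the unique one.
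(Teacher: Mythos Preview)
Your proof is correct and takes a genuinely different route from the paper. The paper proceeds by Cramer's rule: it first reduces the coefficient matrix of \eqref{h5} to triangular form to compute the determinant $\Delta_n=(-1)^{n-1}(2n-1)$, and then establishes the formula for the numerators $\Delta_n^i$ by induction on $n$, expanding along the last row and evaluating several auxiliary determinants $D,D_1,\ldots,D_{k-1}$ at each step. Your argument bypasses all of this by observing that the $k$-th equation is simply $2S-x_k=2/\theta_k$, so the system decouples once $S$ is known; summing the equations gives $S$ in one line, and the rest is bookkeeping to match the symmetric-polynomial form of \eqref{h6}. This is considerably shorter and more transparent than the determinant computation, and it also makes the uniqueness of the solution immediate (your remark that $2n-1\neq0$ replaces the paper's calculation that $\Delta_n\neq0$). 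The paper's approach, on the other hand, produces along the way the explicit value of $\Delta_n$ in \eqref{det1}, which is reused later in the proof of Theorem~\ref{thm1}; your method does not yield that byproduct, though it is of course easy to obtain separately.
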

\begin{proof} We prove this using the method of induction and Cramer's rule. Consider the determinant of the coefficient matrix of unknowns for the system of equations (\ref{h5}):

\[
\begin{vmatrix}
           1 & 2 & \cdots & 2 \\
           2 & 1 & \cdots & 2 \\
           \vdots & \vdots & \vdots & \vdots \\
           2 & 2 & \cdots & 1 \\
         \end{vmatrix}
\]
We reduce to upper triangular form by following the steps below in order:
 add -2 times the first row to all the other rows;
 add -2/3 times the second row to all subsequent rows;
 add -2/5 times the third row to all subsequent rows etc.
Then we have:

\[
\begin{split}
&\begin{vmatrix}
           1 & 2 &2 & \cdots & 2 \\
           2 & 1 &2 & \cdots & 2 \\
           2 & 2 &1 & \cdots & 2 \\
           \vdots & \vdots & \vdots & \vdots& \vdots \\
           2 & 2 &2& \cdots & 1 \\
         \end{vmatrix}= \begin{vmatrix}
           1 & 2 &2 & \cdots & 2 \\
           0 & -3 &-2& \cdots & -2 \\
           0 & -2 &-3& \cdots & -2 \\
           \vdots & \vdots & \vdots & \vdots& \vdots \\
           0 & -2 &-2 & \cdots &-3 \\
         \end{vmatrix}=\\
         &=\begin{vmatrix}
           1 & 2 &2 & \cdots & 2 \\
           0 & -3 &-2& \cdots & -2 \\
           0 & 0 &-\frac{5}{3}& \cdots & -\frac{2 }{3}\\
           \vdots & \vdots & \vdots & \vdots& \vdots \\
           0 & 0 &-\frac{2}{3} & \cdots &-\frac{5}{3} \\
         \end{vmatrix}=\ldots=\begin{vmatrix}
           1 & 2 &2 & \cdots & 2 \\
           0 & -3 &-2& \cdots & -2 \\
           0 & 0 &-\frac{5}{3}& \cdots & -\frac{2 }{3}\\
           \vdots & \vdots & \vdots & \vdots& \vdots \\
           0 & 0 &0 & \cdots &-\frac{2n-1}{2n-3} \\
         \end{vmatrix}.
         \end{split}
\]

Thus,
\begin{equation}\label{det1}
\Delta_n=
\begin{vmatrix}
           1 & 2 & \cdots & 2 \\
           2 & 1 & \cdots & 2 \\
           \vdots & \vdots & \vdots & \vdots \\
           2 & 2 & \cdots & 1 \\
         \end{vmatrix}=(-1)^{n-1}(2n-1).
\end{equation}
 Note that $\overline{x}_i=\frac{\Delta_n^i}{\Delta_n}.$ Now we prove that

\[
\begin{split}
\Delta_n^i&=(-1)^{n-1}\cdot\frac{4\theta_i\sum_{\substack{1\leq i_1<i_2<...<i_{n-2}\leq n \\ i_{r}\neq i}}\theta_{i_1}\theta_{i_2}\ldots\theta_{i_{n-2}}- (4n-6)\prod_{\substack{j=1 \\ j\neq i}}^{n}\theta_j}{\prod_{j=1}^{n}\theta_j}=\\
&=4(-1)^{n-1}\sum_{\substack{j=1 \\ j\neq i}}^{n}\frac{1}{\theta_j}-(-1)^{n-1}(4n-6)\frac{1}{\theta_i}.
\end{split}
\]

If we prove for the case $\Delta_n^1$, then the remaining cases can be handled in a similar way.

 By solving second and third order determinants we get that

   \[
   \Delta_2^1=-\frac{4\theta_1-2\theta_2}{\theta_1\theta_2},\ \  \Delta_3^1=\frac{4\theta_1(\theta_2+\theta_3)-6\theta_2\theta_3}{\theta_1\theta_2\theta_3}.
   \]

Assume that the equality holds for $n=k.$ We have to show that is true for $n=k+1.$ We calculate  $\Delta_{k+1}^1$ by expending last row:

\[
\begin{split}
\Delta_{k+1}^1 & =\begin{vmatrix}
           \frac{2}{\theta_1} & 2 &2 & \cdots & 2 \\
           \frac{2}{\theta_2} & 1 &2 & \cdots & 2 \\
           \frac{2}{\theta_3} & 2 &1 & \cdots & 2 \\
           \vdots & \vdots & \vdots & \vdots& \vdots \\
           \frac{2}{\theta_{k+1}} & 2 &2& \cdots & 1 \\
         \end{vmatrix}=     (-1)^{k+2} \frac{2}{\theta_{k+1}}\cdot \begin{vmatrix}
           2 & 2 &2 & \cdots & 2 \\
           1 & 2 &2 & \cdots & 2 \\
           2 & 1 &2 & \cdots & 2 \\
           \vdots & \vdots & \vdots & \vdots& \vdots \\
           2 & 2 &2& \cdots & 1 \\
         \end{vmatrix}+\\
         &+2(-1)^{k+3} \begin{vmatrix}
           \frac{2}{\theta_1} & 2 &2 & \cdots & 2&2 \\
           \frac{2}{\theta_2} & 2 &2 & \cdots & 2&2 \\
           \frac{2}{\theta_3} & 1 &2 & \cdots & 2&2 \\
           \vdots & \vdots & \vdots & \vdots& \vdots  & \vdots \\
           \frac{2}{\theta_{k}} & 2 &2& \cdots & 1&2 \\
         \end{vmatrix}+2(-1)^{k+4}  \begin{vmatrix}
           \frac{2}{\theta_1} & 2 &2 & \cdots & 2&2 \\
           \frac{2}{\theta_2} & 1 &2 & \cdots & 2&2 \\
           \frac{2}{\theta_3} & 2 &2 & \cdots & 2&2 \\
           \vdots & \vdots & \vdots & \vdots& \vdots  & \vdots \\
           \frac{2}{\theta_{k}} & 2 &2& \cdots & 1&2 \\
         \end{vmatrix}+\cdots + \Delta_{k}^1=\\
         &=(-1)^{k+2}\frac{2}{\theta_{k+1}}\cdot D+2(-1)^{k+3}D_1+2(-1)^{k+4}D_2+...+2(-1)^{2k+1}D_{k-1}+\Delta_{k}^1.
\end{split}
\]

We compute $D$ multiplying the previous column by -1 and adding it to the next column we get the triangular form:
\[
D=\begin{vmatrix}
           2 & 2 &2 & \cdots & 2 \\
           1 & 2 &2 & \cdots & 2 \\
           2 & 1 &2 & \cdots & 2 \\
           \vdots & \vdots & \vdots & \vdots& \vdots \\
           2 & 2 &2& \cdots & 1 \\
         \end{vmatrix}=\begin{vmatrix}
           2 & 0 &0 & \cdots & 0 \\
           1 & 1 &0 & \cdots & 0 \\
           2 & -1 &1 & \cdots & 0 \\
           \vdots & \vdots & \vdots & \vdots& \vdots \\
           2 & 0 &0& \cdots & 1 \\
         \end{vmatrix}=2
\]

Similarly, we calculate $D_1$ multiplying the previous column by -1 and adding it to the next column (except first column) we get:
\[
D_1= \begin{vmatrix}
           \frac{2}{\theta_1} & 2 &2 & \cdots & 2&2 \\
           \frac{2}{\theta_2} & 2 &2 & \cdots & 2&2 \\
           \frac{2}{\theta_3} & 1 &2 & \cdots & 2&2 \\
           \vdots & \vdots & \vdots & \vdots& \vdots  & \vdots \\
           \frac{2}{\theta_{k}} & 2 &2& \cdots & 1&2 \\
         \end{vmatrix}=\begin{vmatrix}
           \frac{2}{\theta_1} & 2 &0 & \cdots & 0&0 \\
           \frac{2}{\theta_2} & 2 &0 & \cdots & 0&0 \\
           \frac{2}{\theta_3} & 1 &1 & \cdots & 0&0 \\
           \vdots & \vdots & \vdots & \vdots& \vdots  & \vdots \\
            \frac{2}{\theta_{k}} & 0 &2& \cdots & 1&0 \\
           \frac{2}{\theta_{k}} & 0 &2& \cdots & -1&1 \\
         \end{vmatrix}=\frac{4}{\theta_1}-\frac{4}{\theta_2}.
\]

If we exchange second and third row in $D_2$ and calculate as in $D_1$ then we get

\[
D_2= -\left(\frac{4}{\theta_1}-\frac{4}{\theta_3}\right)
\]
and so on. Thus,

 \[
D_{r-1}= (-1)^{r-2}\left(\frac{4}{\theta_1}-\frac{4}{\theta_r}\right), \ \ r=2,3,...,k.
\]

To sum up and using the expression for $\Delta_{k}^1$ we obtain
\[
\begin{split}
\Delta_{k+1}^1&=(-1)^{k+2}\frac{4}{\theta_{k+1}}+2(-1)^{k+3}\left(\frac{4}{\theta_1}-\frac{4}{\theta_2}\right)+2(-1)^{k+5}\left(\frac{4}{\theta_1}-\frac{4}{\theta_3}\right)+...+\Delta_{k}^1=\\
&=(-1)^{k}\frac{4}{\theta_{k+1}}+\frac{(-1)^{k-1}8(k-1)}{\theta_1}-\frac{(-1)^{k-1}(4k-6)}{\theta_1}+8(-1)^k\sum_{\substack{j=2}}^{k}\frac{1}{\theta_j}+4(-1)^{k-1}\sum_{\substack{j=2}}^{k}\frac{1}{\theta_j}=\\
&=4(-1)^k\sum_{\substack{j=2}}^{k+1}\frac{1}{\theta_j}+\frac{(-1)^{k-1}(4k-2)}{\theta_1}=4(-1)^k\sum_{\substack{j=2}}^{k+1}\frac{1}{\theta_j}-\frac{(-1)^{k}(4(k+1)-6)}{\theta_1}.
\end{split}
\]

Thus, we complete the proof of Lemma.
\end{proof}

Now we find the remaining fixed points. It is obvious that
$$\left(0,0,...,\frac{2}{\theta_i},...,0\right)\in\mathbb{R}_{+}^{n}$$
also fixed point for any $i=1,2,...,n.$

Let's assume  $r$ coordinates $x_{s_1}, x_{s_2},...,x_{s_r}$ be zero and $n-r$ coordinates $x_{s_{r+1}},...,x_{s_n}$ be non-zero, where $1\leq s_i\leq n$ for any $i=1,...,n.$ Then we have the following system of equations, similar to the system (\ref{h5}):

\begin{equation}
x_{s_k}+2\sum_{\substack{i=r+1 \\ i\neq k}}^{n}x_{s_i}=\frac{2}{\theta_{s_k}},\quad k=r+1,\ldots,n.\medskip\\
\label{h7}
\end{equation}

And the solution also similar to the solution (\ref{h6}):

\begin{equation}
\widetilde{x}_{s_i}=\frac{4\theta_{s_i}\sum_{\substack{r+1\leq i_1<i_2<...<i_{n-r-2}\leq n \\ i_{j}\neq i}}\theta_{s_{i_1}}\theta_{s_{i_2}}\ldots\theta_{s_{i_{n-r-2}}}- (4(n-r)-6)\prod_{\substack{j=r+1 \\ j\neq i}}^{n}\theta_{s_j}}{(2(n-r)-1)\prod_{j=r+1}^{n}\theta_{s_j}}.\\
\label{h8}
\end{equation}

Thus, using Lemma \ref{lema} and above discussion we have proved the following Theorem.

\begin{thm} The operator (\ref{h2}) has the following fixed points:

(i) \ \ $\left(0,0,...,0\right)\in\mathbb{R}_{+}^{n};$

(ii) $\left(0,0,...,\frac{2}{\theta_i},...,0\right)\in\mathbb{R}_{+}^{n}-$ $i^{th}$  coordinate is non-zero for any $i=1,2,...,n;$

(iii) \ \ $(\overline{x}_{1},\overline{x}_{2},...,\overline{x}_{n})-$ all coordinates are non-zero, where $\overline{x}_{i}$ is defined as in (\ref{h6});

(iv) \ \ $r$ coordinates $\widetilde{x}_{s_1},...,\widetilde{x}_{s_r}-$ are zeros, $n-r$ coordinates $\widetilde{x}_{s_{r+1}},...,\widetilde{x}_{s_n}-$ are non-zeros, where $\widetilde{x}_{s_i}$ is defined as in (\ref{h8}) for $i=r+1,...,n.$
\end{thm}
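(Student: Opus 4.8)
The plan is a complete case analysis of the fixed-point equations (\ref{h4}), organised by the set of vanishing coordinates. For each $k$, the $k$-th equation of (\ref{h4}) is equivalent to the alternative $x_k=0$ or $x_k+2\sum_{i\neq k}x_i=2/\theta_k$. Hence every fixed point is specified by a subset $S\subseteq\{1,\dots,n\}$, namely the set of indices on which the coordinate vanishes; writing $r=|S|$ and letting $s_{r+1},\dots,s_n$ enumerate the complement, the remaining equations are precisely system (\ref{h7}), because every term indexed by $S$ drops out of the sums $\sum_{i\neq k}$.

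First I would dispose of the two extreme cases: $r=n$ forces the origin, which is item (i), and $r=n-1$ leaves the single equation $x_{s_n}=2/\theta_{s_n}$, which is item (ii). For $0\le r\le n-2$ the reduced system (\ref{h7}) has a coefficient matrix of the shape evaluated in (\ref{det1}), now of size $n-r$, with determinant $(-1)^{n-r-1}(2(n-r)-1)\neq 0$; it is therefore uniquely solvable, and Lemma \ref{lema}, applied with $n$ replaced by $n-r$ and with parameters $\theta_{s_{r+1}},\dots,\theta_{s_n}$, gives its solution in the form (\ref{h8}). The subcase $r=0$ yields item (iii) (formula (\ref{h6}) being the special case of (\ref{h8}) with empty $S$), while $1\le r\le n-2$ yields item (iv). Since every fixed point determines such an $S$ and each $S$ produces exactly one candidate point, this enumeration is complete.

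I do not expect a genuine obstacle, since the analytic work, that is, the determinant evaluation (\ref{det1}) and the closed forms (\ref{h6}) and (\ref{h8}), has already been carried out in Lemma \ref{lema}. The one step that deserves a careful statement is the decoupling observation: once the zero-pattern $S$ is fixed, the sums $\sum_{i\neq k}$ in (\ref{h4}) collapse to sums over the complement of $S$ alone, so that the mixed configurations in (iv) are literally lower-dimensional instances of the same linear system. Finally, since $H$ is defined on $\mathbb{R}_{+}^{n}$, it is worth noting that among the candidates in (iii) and (iv) only those all of whose coordinates are nonnegative are honest fixed points of $H$; this is a restriction on the parameters $\theta_k$, not on the list of formulas.
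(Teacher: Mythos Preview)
Your proposal is correct and follows essentially the same approach as the paper: the paper's proof is exactly the discussion preceding the theorem, which organises the fixed-point equations by the pattern of vanishing coordinates, reduces the nonzero block to the linear system (\ref{h7}), and invokes Lemma~\ref{lema} (in the appropriate lower dimension) for the explicit solution. Your write-up is somewhat more careful in stating the decoupling step and the non-degeneracy of the coefficient matrix, and in flagging the positivity constraint on the parameters, but the overall strategy is identical.
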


\begin{prop} The operator (\ref{h2}) has $2^n$ fixed points.
\end{prop}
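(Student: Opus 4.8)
The plan is to read the count off the classification Theorem by organizing the fixed points according to their \emph{support} $S=\{k:x_k\neq 0\}\subseteq\{1,\dots,n\}$. First I would note that $x\in\mathbb R_+^n$ is fixed precisely when, for each $k$, either $x_k=0$ or the linear relation $x_k+2\sum_{i\neq k}x_i=2/\theta_k$ holds; imposing the latter only for the indices in $S$ produces exactly a system of the form (\ref{h7}). Consequently every fixed point is obtained from some choice of $S$, and --- this is the key uniqueness point --- a fixed point is determined by its support.

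Second, I would verify uniqueness per support. For $|S|=m\geq 1$ the coefficient matrix of the associated system is the $m\times m$ matrix with $1$ on the diagonal and $2$ off it, whose determinant equals $(-1)^{m-1}(2m-1)\neq 0$ by the row reduction in the proof of Lemma \ref{lema} (formula (\ref{det1}) applied in dimension $m$). Hence each nonempty $S$ gives a single candidate fixed point $x^{(S)}$, namely the Cramer solution $(\widetilde x_{s_i})_{s_i\in S}$ of (\ref{h8}) completed by zeros off $S$, while $S=\emptyset$ gives the origin; the sizes $m=1$ and $m=n$ recover families (ii) and (iii) of the Theorem.

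Third comes the enumeration itself: there are $\sum_{r=0}^{n}\binom{n}{r}=2^n$ subsets of $\{1,\dots,n\}$, which partition into exactly the four families (i) ($S=\emptyset$), (ii) ($|S|=1$), (iii) ($|S|=n$) and (iv) ($2\leq|S|\leq n-1$). To upgrade ``$2^n$ candidates'' to ``$2^n$ \emph{distinct} fixed points'' I would finally argue that $S\mapsto x^{(S)}$ is injective, equivalently that the Cramer solution attached to each $S$ has all coordinates nonzero --- so that $x^{(S)}$ really has support $S$ and distinct supports cannot coincide. Via the closed forms (\ref{h6})--(\ref{h8}) this amounts to checking that $4\theta_{s_i}\sum_{j\in S,\,j\neq i}\theta_{s_j}^{-1}\neq 4|S|-6$ for every $i\in S$ and every $S$ with $|S|\geq 2$.

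I expect this final no-vanishing check to be the only genuine obstacle; everything before it is bookkeeping resting on Lemma \ref{lema}. Since it is also the step that depends most delicately on the constants $(\theta_k)_{k=1}^n$, I would state it separately rather than fold it into the count.
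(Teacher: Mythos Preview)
Your approach is essentially the paper's: the paper's proof is the single line ``use $\sum_{k=0}^{n}\binom{n}{k}=2^n$, where $\binom{n}{k}$ counts the fixed points with exactly $k$ zero coordinates,'' which is precisely your support-by-support enumeration. Your added care about uniqueness per support (via the determinant (\ref{det1})) and about the non-vanishing of the Cramer coordinates is welcome --- the paper's one-line proof silently takes both for granted, so your version is if anything more rigorous than the original.
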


\begin{proof} We can easily get the proof using the property of combinations $\sum_{k=0}^{n}C_k^n=2^n,$  where  $C_k^n$ means the number of fixed points with $k$ zero coordinates.
\end{proof}

\textbf{Type of fixed points}. First, we consider the low-dimensional cases. Let $n=2.$ Then operator (\ref{h2}) has the following form:

\begin{equation}
H: \left\{ \begin{aligned}x_{1}' & =\frac{\theta_1 x_1^2}{2}+\theta_1x_1x_2 \medskip\\
x_{2}' & =\frac{\theta_2 x_2^2}{2}+\theta_2x_1x_2.
\end{aligned}
\right.\label{h9}
\end{equation}

Fixed points are

\[
E_0=(0;0), \ \ E_1=\left(\frac{2}{\theta_1};0\right), \ \ E_2=\left(0; \frac{2}{\theta_2}\right), \ \ \overline{E}=\left(\frac{4\theta_1-2\theta_2}{3\theta_1\theta_2};\frac{4\theta_2-2\theta_1}{3\theta_1\theta_2}\right)
\]

Note that $2\theta_1>\theta_2, 2\theta_2>\theta_1$ for positiveness of coordinates of $\overline{E}.$

\begin{prop}\label{propn2} The following statements hold true:

(a) \ \ the fixed point $E_0=(0;0)$ is an attracting;

(b) $$E_{1}=\left\{\begin{array}{lll}
&{\rm nonhyperbolic}, ~~&{\rm if} \ \  \theta_1=2\theta_2\\
&{\rm repelling}, ~~& {\rm if} \ \  \theta_1<2\theta_2  \\
&{\rm saddle}, ~~& {\rm if}  \ \  \theta_1>2\theta_2
\end{array}\right.$$

(c) $$E_{2}=\left\{\begin{array}{lll}
&{\rm nonhyperbolic}, ~~&{\rm if} \ \  \theta_2=2\theta_1\\
&{\rm repelling}, ~~& {\rm if} \ \  \theta_2<2\theta_1  \\
&{\rm saddle}, ~~& {\rm if}  \ \  \theta_2>2\theta_1
\end{array}\right.$$

(d) \ \ the fixed point $\overline{E}$ is a saddle.

\end{prop}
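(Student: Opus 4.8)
The plan is to linearize $H$ at each of the four fixed points and classify by the moduli of the eigenvalues of the Jacobian matrix (attracting if all $|\lambda|<1$, repelling if all $|\lambda|>1$, saddle if the moduli lie on both sides of $1$, and nonhyperbolic if some $|\lambda|=1$). From (\ref{h9}) the Jacobian of $H$ at a point $(x_1,x_2)$ is
\[
J(x_1,x_2)=\begin{pmatrix}\theta_1(x_1+x_2) & \theta_1 x_1\\ \theta_2 x_2 & \theta_2(x_1+x_2)\end{pmatrix},
\]
so the argument reduces to substituting $E_0$, $E_1$, $E_2$, $\overline{E}$ into this $2\times 2$ matrix and reading off the spectrum.

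For (a), at $E_0=(0;0)$ the matrix $J$ is the zero matrix, hence both eigenvalues vanish and $E_0$ is attracting (indeed super-attracting, which one can also see directly from $\|H(x)\|=O(\|x\|^{2})$ near the origin). For (b), at $E_1=(2/\theta_1;0)$ one obtains the upper-triangular matrix with diagonal entries $2$ and $2\theta_2/\theta_1$, so $\lambda_1=2$ and $\lambda_2=2\theta_2/\theta_1$; since $|\lambda_1|=2>1$ always, the type is dictated by $\lambda_2$, which gives the three cases $\theta_1=2\theta_2$ (then $\lambda_2=1$, nonhyperbolic), $\theta_1<2\theta_2$ (then $\lambda_2>1$, repelling), and $\theta_1>2\theta_2$ (then $\lambda_2<1$, saddle). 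Part (c) is obtained from (b) by interchanging $\theta_1$ and $\theta_2$.

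For (d), using $\overline{x}_1+\overline{x}_2=\tfrac{2(\theta_1+\theta_2)}{3\theta_1\theta_2}$ I would compute the trace $T=\tfrac{2(\theta_1+\theta_2)^{2}}{3\theta_1\theta_2}$ and determinant $D=\tfrac{4(\theta_1^{2}-\theta_1\theta_2+\theta_2^{2})}{3\theta_1\theta_2}$ of $J(\overline{E})$, and verify that $\lambda=2$ satisfies $\lambda^{2}-T\lambda+D=0$ (consistent with the general fact, proved in Section 2, that one eigenvalue at the positive fixed point equals $2$). The remaining eigenvalue is then $\lambda_2=\tfrac{D}{2}=\tfrac{2(\theta_1^{2}-\theta_1\theta_2+\theta_2^{2})}{3\theta_1\theta_2}>0$, and the inequality $\lambda_2<1$ is equivalent to $2\theta_1^{2}-5\theta_1\theta_2+2\theta_2^{2}<0$, i.e. to $(2\theta_1-\theta_2)(\theta_1-2\theta_2)<0$. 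But positivity of the coordinates of $\overline{E}$ already requires $2\theta_1>\theta_2$ and $2\theta_2>\theta_1$, which is exactly this inequality, so $0<\lambda_2<1<2=\lambda_1$ and $\overline{E}$ is a saddle. The only genuine computation is the algebra at $\overline{E}$: obtaining $T$ and $D$ in closed form and recognizing the factorization $2\theta_1^{2}-5\theta_1\theta_2+2\theta_2^{2}=(2\theta_1-\theta_2)(\theta_1-2\theta_2)$, which is precisely what links the sign of $\lambda_2-1$ to the admissibility constraints on $\overline{E}$; all the other steps are one-line substitutions.
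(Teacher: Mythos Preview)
Your proof is correct. Parts (a)--(c) coincide with the paper's argument essentially verbatim: compute the Jacobian, read off the eigenvalues $0,0$ at $E_0$ and $2,\,2\theta_2/\theta_1$ (resp.\ $2,\,2\theta_1/\theta_2$) at $E_1$ (resp.\ $E_2$), and classify accordingly.

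For (d) you take a different route. The paper does not identify $\lambda_1=2$ at this stage; instead it writes the characteristic polynomial $F(\lambda)=\lambda^2-T\lambda+D$ and invokes an external lemma (Cheng--Wang): $F(1)<0$ forces one root in $(1,\infty)$, and then $F(-1)>0$ forces the other root to have modulus $<1$. The paper then checks $F(1)=\frac{(2\theta_1-\theta_2)(\theta_1-2\theta_2)}{3\theta_1\theta_2}<0$ and $F(-1)>0$ directly. Your approach is more explicit and self-contained: you verify that $\lambda=2$ is a root of $F$, deduce the second eigenvalue as $D/2=\tfrac{2(\theta_1^2-\theta_1\theta_2+\theta_2^2)}{3\theta_1\theta_2}$, and tie the condition $\lambda_2<1$ to the same factorization $(2\theta_1-\theta_2)(\theta_1-2\theta_2)<0$ already imposed by positivity of $\overline{E}$. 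This avoids the cited lemma and, as a bonus, produces the exact value of $\lambda_2$ (which the paper only computes later, when finding the tangent to the stable curve). One caution: phrase your remark about ``the general fact, proved in Section~2'' carefully---you \emph{verify} $\lambda=2$ here by substitution, so there is no logical dependence on the later theorem; the parenthetical is merely a cross-reference, not an appeal.
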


\begin{proof} The Jacobian of the operator (\ref{h9}) is
\begin{equation}
J(x_1,x_2)=\begin{bmatrix}
\theta_1(x_1+x_2) & \theta_1x_1\\
\theta_2x_2 & \theta_2(x_1+x_2)
\end{bmatrix}.\label{jac}
\end{equation}
Since $J(0,0)$ is a null matrix, its eigenvalues are zero, so we have a proof of the statement $(a)$. The eigenvalues of $J(E_1)$  (resp. $J(E_2)$) are 2 and $\frac{2\theta_2}{\theta_1}$  (resp. 2 and $\frac{2\theta_1}{\theta_2}$),  one can get the proof of $(b)$ (resp. $(c)$).
Before proceeding to the proof of the last assertion, we present the following key lemma.

\begin{lem}[\cite{Cheng}]\label{keylem} Let $F(\lambda)=\lambda^2+B\lambda+C,$ where $B$ and $C$ are two real constants. Suppose $\lambda_1$ and $\lambda_2$ are two roots of $F(\lambda)=0.$ If $F(1)<0,$ then $F(\lambda)=0$ has one root lying in $(1;\infty).$ Moreover, the other root $\lambda$ satisfies $|\lambda|<1$ if and only if $F(-1)>0.$
\end{lem}

Proof of $(d)$. The Jacobian is
\begin{equation}
J(\overline{E})=\begin{bmatrix}
\frac{2(\theta_1+\theta_2)}{3\theta_2} & \frac{4\theta_1-2\theta_2}{3\theta_2}\\
\frac{4\theta_2-2\theta_1}{3\theta_1} & \frac{2(\theta_1+\theta_2)}{3\theta_1}
\end{bmatrix}.
\end{equation}

 Consider the characteristic polynomial  of $J(E_3)$  (as in \cite{Sh}):
\begin{equation}\label{chareq}
F(x)=x^2-\frac{2(\theta_1+\theta_2)^2}{3\theta_1\theta_2}x+\frac{4(\theta_1+\theta_2)^2-4(5\theta_1\theta_2-2\theta_1^2-2\theta_2^2)}{9\theta_1\theta_2}.
\end{equation}
Then
\[
F(1)=\frac{9\theta_1\theta_2-2(\theta_1+\theta_2)^2-4(5\theta_1\theta_2-2\theta_1^2-2\theta_2^2)}{9\theta_1\theta_2}=\frac{(2\theta_1-\theta_2)(\theta_1-2\theta_2)}{3\theta_1\theta_2}<0,
\]
\[
F(-1)=\frac{9\theta_1\theta_2+10(\theta_1+\theta_2)^2-4(5\theta_1\theta_2-2\theta_1^2-2\theta_2^2)}{9\theta_1\theta_2}=\frac{2\theta_1^2+2\theta_2^2+\theta_1\theta_2}{\theta_1\theta_2}>0,
\]

According to the lemma \ref{keylem}, the proof of the proposition is complete.
\end{proof}

Let $n=3.$ Then operator (\ref{h2}) has the following form:

\begin{equation}
H: \left\{ \begin{aligned}x_{1}' & =\frac{\theta_1 x_1^2}{2}+\theta_1x_1x_2+\theta_1x_1x_3 \medskip\\
x_{2}' & =\frac{\theta_2 x_2^2}{2}+\theta_2x_1x_2+\theta_2x_2x_3 \medskip\\
x_{3}' & =\frac{\theta_3 x_3^2}{2}+\theta_3x_1x_3+\theta_3x_2x_3 \medskip\\
\end{aligned}
\right.\label{h10}
\end{equation}

Fixed points are

\[
\begin{split}
& E_0=(0;0;0), \ \ E_1=\left(\frac{2}{\theta_1};0;0\right), \ \ E_2=\left(0; \frac{2}{\theta_2};0\right),
E_3=\left(0;0; \frac{2}{\theta_3};\right),\\
& E_4=\left(0;\frac{4\theta_2-2\theta_3}{3\theta_2\theta_3};\frac{4\theta_3-2\theta_2}{3\theta_2\theta_3}\right), \ \
E_5=\left(\frac{4\theta_1-2\theta_3}{3\theta_1\theta_3};0; \frac{4\theta_3-2\theta_1}{3\theta_1\theta_3}\right), \\
& E_6=\left(\frac{4\theta_1-2\theta_2}{3\theta_1\theta_2}; \frac{4\theta_2-2\theta_1}{3\theta_1\theta_2}; 0\right), \ \ \overline{E}=(\overline{x}_1; \overline{x}_2; \overline{x}_3),
\end{split}
\]
where $\overline{x}_1, \overline{x}_2, \overline{x}_3$ are defined as following:

\begin{equation}
\overline{x}_1=\frac{4\theta_1(\theta_2+\theta_3)-6\theta_2\theta_3}{5\theta_1\theta_2\theta_3}, \ \ \overline{x}_2=\frac{4\theta_2(\theta_1+\theta_3)-6\theta_1\theta_3}{5\theta_1\theta_2\theta_3}, \ \ \overline{x}_3=\frac{4\theta_3(\theta_1+\theta_2)-6\theta_1\theta_2}{5\theta_1\theta_2\theta_3}.
\label{fpn3}
\end{equation}

\begin{prop} The following statements hold true:

(a) \ \ the fixed point $E_0=(0;0;0)$ is an attracting;

(b) \ \ for any $i=1,2,3$
$$E_{i}=\left\{\begin{array}{lll}
&{\rm nonhyperbolic}, ~~&{\rm if} \ \  \theta_i=2\theta_j \ \ {\rm for \ \ some} \ \ j\neq i\\
&{\rm repelling}, ~~& {\rm if} \ \  \theta_i<2\theta_j  \ \ {\rm for \ \ all} \ \ j\neq i \\
&{\rm saddle}, ~~& {\rm if}  \ \  {\rm otherwise}
\end{array}\right.$$

(c) \ \ for any $k=4,5,6$  the fixed point $E_k$ is a saddle;

(d) \ \ if $\theta_i\theta=2,5$ for some $i=1,2,3$ then the fixed point $\overline{E}$ is a nonhyperbolic fixed point, where $\theta=\frac{1}{\theta_1}+\frac{1}{\theta_2}+\frac{1}{\theta_3}$.

\end{prop}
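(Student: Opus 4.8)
The plan is to show that the hypothesis $\theta_i\theta=\tfrac52$ forces $\lambda=1$ to be an eigenvalue of the Jacobian $J(\overline E)$; since $|1|=1$, this makes $\overline E$ nonhyperbolic. So the whole argument reduces to computing the characteristic polynomial of $J(\overline E)$ and evaluating it at $\lambda=1$.

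First I would write down the Jacobian of \eqref{h10} at a general point. Differentiating \eqref{h2} gives, for all $k$, that the $(k,k)$-entry of $J(x)$ is $\theta_k(x_1+x_2+x_3)$ and the $(k,j)$-entry for $j\neq k$ is $\theta_kx_k$ (this is the $n=3$ analogue of \eqref{jac}). Put $s=\overline x_1+\overline x_2+\overline x_3$. Summing the three formulas in \eqref{fpn3}, the numerators collapse and one gets $s=\dfrac{2(\theta_1\theta_2+\theta_2\theta_3+\theta_1\theta_3)}{5\theta_1\theta_2\theta_3}=\dfrac{2\theta}{5}$. Moreover, rewriting the defining system \eqref{h5} as $2s-\overline x_k=\tfrac2{\theta_k}$ yields $\overline x_k=2s-\tfrac2{\theta_k}$, hence $\theta_k\overline x_k=2\theta_ks-2$. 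Substituting these identities, $J(\overline E)=2I-sD_\theta+\mathbf y\,\mathbf 1^{\top}$, where $D_\theta=\operatorname{diag}(\theta_1,\theta_2,\theta_3)$, $\mathbf 1=(1,1,1)^{\top}$ and $\mathbf y=(2\theta_1s-2,\,2\theta_2s-2,\,2\theta_3s-2)^{\top}$; that is, $J(\overline E)$ is a rank-one perturbation of the diagonal matrix $2I-sD_\theta$.

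Next I would apply the matrix determinant lemma: for every $\lambda$ with $2-\lambda\neq s\theta_k$ $(k=1,2,3)$,
\[
\det\!\big(J(\overline E)-\lambda I\big)=\prod_{k=1}^{3}(2-\lambda-s\theta_k)\left(1+\sum_{k=1}^{3}\frac{2\theta_ks-2}{2-\lambda-s\theta_k}\right),
\]
and clearing denominators gives the polynomial identity, valid for all $\lambda$,
\[
P(\lambda):=\det\!\big(J(\overline E)-\lambda I\big)=\prod_{k=1}^{3}(2-\lambda-s\theta_k)+\sum_{k=1}^{3}(2\theta_ks-2)\prod_{\substack{j=1\\ j\neq k}}^{3}(2-\lambda-s\theta_j).
\]
Evaluating at $\lambda=2$ recovers the eigenvalue $2$ noted in the Introduction. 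Now evaluate $P$ at $\lambda=1$: writing $2\theta_ks-2=-2(1-s\theta_k)$ and using the trivial identity $\sum_k(1-s\theta_k)\prod_{j\neq k}(1-s\theta_j)=3\prod_k(1-s\theta_k)$, everything combines to $P(1)=\prod_{k}(1-s\theta_k)-6\prod_{k}(1-s\theta_k)=-5\prod_{k=1}^{3}(1-s\theta_k)$. Since $s\theta_k=\tfrac{2\theta}{5}\theta_k$, we get $P(1)=0$ precisely when $\theta_k\theta=\tfrac52$ for some $k$. Hence, if $\theta_i\theta=\tfrac52$ for some $i$, then $1$ is a root of the characteristic polynomial, i.e. an eigenvalue of $J(\overline E)$, so $\overline E$ is nonhyperbolic. (Equivalently, one may factor $(\lambda-2)$ out of $P$ and note $1$ is a root of the remaining quadratic; Lemma \ref{keylem} is not needed here.)

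The computation is short, so there is no real obstacle, but two points need care. First, the whole argument rests on the collapse $s=\tfrac{2\theta}{5}$, which is what makes the diagonal-plus-rank-one normal form explicit. Second, at the very parameter value $s\theta_i=1$ the rational expression from the matrix determinant lemma has a removable singularity at $\lambda=1$, so the conclusion must be drawn from the polynomial identity for $P(\lambda)$, not from the rational form. It is also worth recording as a remark that $s\theta_i=1$ is equivalent to $\overline x_i=0$: the hypothesis $\theta_i\theta=\tfrac52$ is exactly the condition under which $\overline E$ reaches the boundary face $\{x_i=0\}$ and merges with one of the fixed points $E_4,E_5,E_6$, and the eigenvalue $1$ reflects the transcritical-type bifurcation taking place there.
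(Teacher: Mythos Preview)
Your proposal treats only part~(d); parts (a)--(c) are not mentioned. In the paper these are disposed of quickly---(a) and (b) are declared obvious from the explicit Jacobian, and (c) is obtained by noting that at $E_4$ (say) one eigenvalue is $\frac{2\theta_1(\theta_2+\theta_3)}{3\theta_2\theta_3}$ while the remaining $2\times2$ block is exactly the $n=2$ Jacobian analysed in Proposition~\ref{propn2}, so Lemma~\ref{keylem} gives one eigenvalue in $(1,\infty)$ and one of modulus $<1$. You should at least say a word about each.

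For (d) your argument is correct, but it is considerably heavier than what the paper does. The paper simply writes out
\[
J(\overline E)=\begin{bmatrix}
\tfrac{2\theta_1\theta}{5} & \tfrac{4\theta_1\theta}{5}-2 & \tfrac{4\theta_1\theta}{5}-2\\
\tfrac{4\theta_2\theta}{5}-2 & \tfrac{2\theta_2\theta}{5} & \tfrac{4\theta_2\theta}{5}-2\\
\tfrac{4\theta_3\theta}{5}-2 & \tfrac{4\theta_3\theta}{5}-2 & \tfrac{2\theta_3\theta}{5}
\end{bmatrix}
\]
and observes that if $\theta_i\theta=\tfrac52$ then the $i$-th row is the standard basis row $e_i^{\top}$, so $J(\overline E)-I$ has a zero row and $\lambda=1$ is an eigenvalue; nonhyperbolicity follows immediately. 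No characteristic polynomial, no matrix determinant lemma.

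Your diagonal-plus-rank-one approach is nonetheless valuable: it yields the full characteristic polynomial in closed form, makes the eigenvalue $2$ of Theorem~\ref{thm1} drop out as the side computation $P(2)=0$, and explains \emph{why} the condition $\theta_i\theta=\tfrac52$ is special---your identity $P(1)=-5\prod_k(1-s\theta_k)$ shows that $\lambda=1$ appears exactly when $s\theta_i=1$, i.e.\ when $\overline x_i=0$ and $\overline E$ collides with a boundary fixed point. That geometric interpretation is absent from the paper's argument. So the paper's proof is shorter, but yours carries more structural information and generalises cleanly to arbitrary $n$.
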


\begin{proof} The Jacobian of the operator (\ref{h10}) is
\begin{equation}
J(x)=\begin{bmatrix}
\theta_1(x_1+x_2+x_3) & \theta_1x_1 & \theta_1x_1 \\
\theta_2x_2 & \theta_2(x_1+x_2+x_3) & \theta_2x_2 \\
\theta_3x_3 & \theta_3x_3 & \theta_3(x_1+x_2+x_3)
\end{bmatrix}.\label{jac3}
\end{equation}
The proof of assertions $(a)$ and $(b)$ is obvious. Consider the eigenvalues of $J(E_4):$
\begin{equation}
J(E_4)=\begin{bmatrix}
\frac{2\theta_1(\theta_2+\theta_3)}{3\theta_2\theta_3} & 0 & 0 \\
\frac{4\theta_2-2\theta_3}{3\theta_3} & \frac{2(\theta_2+\theta_3)}{3\theta_3}  & \frac{4\theta_2-2\theta_3}{3\theta_3} \\
\frac{4\theta_3-2\theta_2}{3\theta_2} & \frac{4\theta_3-2\theta_2}{3\theta_2}  & \frac{2(\theta_2+\theta_3)}{3\theta_2}
\end{bmatrix}.
\end{equation}
One eigenvalue of $J(E_4)$ is $\frac{2\theta_1(\theta_2+\theta_3)}{3\theta_2\theta_3}.$ As in the proof of Proposition \ref{propn2}, for the remaining eigenvalues we can say that one of them lies in $(1;\infty),$ and the other is less that 1 in absolute value. Similarly, the fixed points $E_5$ and $E_6$ are also saddle fixed points. Consider the proof of statement $(d).$ After some simple calculations, the Jacobian matrix at the point $\overline{E}$ has the form:
\begin{equation}
J(\overline{E})=\begin{bmatrix}
\frac{2\theta_1\theta}{5} & \frac{4\theta_1\theta}{5} -2 & \frac{4\theta_1\theta}{5} -2 \\
\frac{4\theta_2\theta}{5} -2 & \frac{2\theta_2\theta}{5}   & \frac{4\theta_2\theta}{5} -2 \\
\frac{4\theta_3\theta}{5} -2 & \frac{4\theta_3\theta}{5} -2  & \frac{2\theta_3\theta}{5}
\end{bmatrix}.
\end{equation}
From this we get the proof of the last statement.
\end{proof}

\begin{prop}\label{propn3} Let $n=3$ and $J(x)$ be a Jacobian, $\overline{E}$ is a fixed point of (\ref{h10}). Then one eigenvalue $\lambda$ of $J(\overline{E})$ is 2 and
$$\overline{E}=\left\{\begin{array}{lll}
&{\rm repelling}, ~~&{\rm if} \ \  |\lambda_{-}|>1, |\lambda_{+}|>1\\
&{\rm saddle}, ~~& {\rm if}  \ \  {\rm otherwise}
\end{array}\right.$$
where $\lambda_{\mp}=\frac{0.4\theta(\theta_1+\theta_2+\theta_3)-2\mp\sqrt{D}}{2},$ $\theta=\frac{1}{\theta_1}+\frac{1}{\theta_2}+\frac{1}{\theta_3},$ $$D=0.16\theta^2(\theta_1+\theta_2+\theta_3)^2-11.2\theta(\theta_1+\theta_2+\theta_3)+1.92\theta^2(\theta_1\theta_2+\theta_1\theta_3+\theta_2\theta_3)+36.$$
\end{prop}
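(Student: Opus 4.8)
The plan is to read off all three eigenvalues of $J(\overline E)$ from the structure of the Jacobian.

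\textbf{Step 1 (the eigenvalue $2$).} I would first check that $\overline E$ itself is an eigenvector with eigenvalue $2$. Put $S=\overline x_1+\overline x_2+\overline x_3$. By (\ref{jac3}) the entries of $J(\overline E)$ are $J_{kk}=\theta_kS$ and $J_{kj}=\theta_k\overline x_k$ for $j\ne k$, hence
\[
\bigl(J(\overline E)\,\overline E\bigr)_k=\theta_kS\,\overline x_k+\theta_k\overline x_k\sum_{j\ne k}\overline x_j=\theta_k\overline x_k\,(2S-\overline x_k)=\theta_k\overline x_k\cdot\tfrac{2}{\theta_k}=2\,\overline x_k,
\]
the last step using the fixed-point equation (\ref{h5}) in the form $2S-\overline x_k=2/\theta_k$. (The same computation gives the eigenvalue $2$ for every $n$, which is the mechanism behind the assertion announced in the introduction.)

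\textbf{Step 2 (reality of the remaining eigenvalues).} Since $\overline E$ is the positive fixed point, $\theta_k\overline x_k>0$ for all $k$, so conjugating $J(\overline E)$ by the diagonal matrix $\mathrm{diag}\bigl(1/\sqrt{\theta_1\overline x_1},\,1/\sqrt{\theta_2\overline x_2},\,1/\sqrt{\theta_3\overline x_3}\bigr)$ makes the $(k,j)$ and $(j,k)$ off-diagonal entries coincide, producing a symmetric matrix. Thus $J(\overline E)$ has three real eigenvalues: $2$ and, say, $\lambda_-\le\lambda_+$; in particular $D=(\lambda_+-\lambda_-)^2\ge0$, so the radical in the statement is legitimate.

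\textbf{Step 3 (the explicit formula).} Factor the characteristic polynomial as $\det(\lambda I-J(\overline E))=(\lambda-2)\bigl(\lambda^2-T\lambda+P\bigr)$ with $T=\lambda_-+\lambda_+$ and $P=\lambda_-\lambda_+$. Using the explicit form of $J(\overline E)$ from the preceding proposition — its diagonal entries are $a_k:=2\theta_k\theta/5$ and the off-diagonal entries of row $k$ are $2a_k-2$ — the trace gives $2+T=a_1+a_2+a_3=\tfrac{2\theta}{5}(\theta_1+\theta_2+\theta_3)$, i.e. $T=0.4\,\theta(\theta_1+\theta_2+\theta_3)-2$, matching the claimed value of $\lambda_-+\lambda_+$. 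The sum of the three $2\times2$ principal minors of $J(\overline E)$ equals $2T+P$; since that sum is $-3\sum_{i<j}a_ia_j+8\sum_k a_k-12$ and $\sum_{i<j}a_ia_j=\tfrac{4\theta^2}{25}(\theta_1\theta_2+\theta_1\theta_3+\theta_2\theta_3)$, one gets $P=2.4\,\theta(\theta_1+\theta_2+\theta_3)-0.48\,\theta^2(\theta_1\theta_2+\theta_1\theta_3+\theta_2\theta_3)-8$. Then $\lambda_\mp=\tfrac12\bigl(T\mp\sqrt{T^2-4P}\bigr)$, and expanding $T^2-4P$ reproduces exactly the expression $D$ in the statement. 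This last simplification is the only tedious point and is purely mechanical; expanding the full $3\times3$ determinant is an equivalent alternative.

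\textbf{Step 4 (classification).} Since one eigenvalue is $2$ and $|2|>1$, the point $\overline E$ is never attracting. If $|\lambda_-|>1$ and $|\lambda_+|>1$, then all three eigenvalues lie outside the closed unit disk and $\overline E$ is repelling; otherwise at least one of $\lambda_\pm$ has modulus $\le1$ while the eigenvalue $2$ lies outside, so $\overline E$ is a saddle (the borderline nonhyperbolic case being $a_i=1$, i.e. $\theta_i\theta=2.5$, already isolated in the preceding proposition). I anticipate no genuine obstacle; the only real care needed is the algebraic bookkeeping that turns $T^2-4P$ into the stated $D$.
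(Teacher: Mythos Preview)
Your argument is correct and more informative than the paper's. The paper simply asserts ``after some calculations'' that the characteristic polynomial of $J(\overline E)$, written as a polynomial in $(2-x)$, is
\[
(2-x)^3+\bigl[0.4\theta(\theta_1+\theta_2+\theta_3)-6\bigr](2-x)^2+\bigl[1.6\theta(\theta_1+\theta_2+\theta_3)-0.48\theta^2(\theta_1\theta_2+\theta_1\theta_3+\theta_2\theta_3)\bigr](2-x),
\]
so that $(2-x)$ is a factor and the quadratic cofactor gives $\lambda_\mp$; it offers no mechanism for why $2$ should be a root and says nothing about reality of $\lambda_\mp$. Your Step~1 explains the eigenvalue $2$ conceptually by exhibiting $\overline E$ itself as the eigenvector (this is exactly the $n$-independent mechanism the paper only establishes later, in Theorem~\ref{thm1}, by a determinant computation), and your Step~2 supplies the missing justification that $D\ge0$. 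Using the trace and the sum of principal minors to recover $T$ and $P$ is cleaner than expanding the $3\times3$ determinant and lands on the same quadratic the paper obtains after the substitution $y=2-x$. In short, both routes reach identical formulas; yours trades a brute-force polynomial expansion for a structural argument that also proves more (reality of the spectrum) and foreshadows the general theorem.
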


\begin{proof} After some calculations we obtain the following characteristic polynomial for  $J(\overline{E}):$
$$(2-x)^3+[0.4\theta(\theta_1+\theta_2+\theta_3)-6](2-x)^2$$ $$+[1.6\theta(\theta_1+\theta_2+\theta_3)-0.48\theta^2(\theta_1\theta_2+\theta_1\theta_3+\theta_2\theta_3)](2-x).$$
From this we can easily get the proof of the proposition.
\end{proof}
\begin{rem} Of course, there are values of $\theta_i$ such that $\overline{E}$ will be repelling or saddle. For example, if we choose $\theta_1=0.02, \theta_2=0.02, \theta_3=0.1$ then $\lambda_{-}\approx1.12, \lambda_{+}\approx3.04$ or $\theta_1=0.3, \theta_2=0.5, \theta_3=0.4$ then $\lambda_{-}\approx0.64, \lambda_{+}\approx1.12.$
\end{rem}

\textbf{General case.} The Jacobian matrix of the operator (\ref{h2}) has the form:

\begin{equation}
J(x)=\begin{bmatrix}
\theta_1\sum_{i=1}^nx_i & \theta_1 x_1 & \cdots & \theta_1 x_1 \\
\theta_2 x_2  & \theta_2\sum_{i=1}^nx_i & \cdots & \theta_2 x_2 \\
\vdots &  \vdots& \vdots & \vdots \\
\theta_n x_n & \theta_n x_n & \cdots & \theta_n\sum_{i=1}^nx_i
\end{bmatrix}.\label{jacgeneral}
\end{equation}
Using this Jacobian matrix, we prove the following proposition.

\begin{prop} For the operator (\ref{h2}) the following statements hold true:

(a) \ \ the fixed point $(0,0,...,0)$ is an attracting;

(b) \ \ for any $i=\overline{1,n}$ the fixed point
$$\left(0,0,...,\frac{2}{\theta_i},...,0\right)=\left\{\begin{array}{lll}
&{\rm nonhyperbolic}, ~~&{\rm if} \ \  \theta_i=2\theta_j \ \ {\rm for \ \ some} \ \ j\neq i\\
&{\rm repelling}, ~~& {\rm if} \ \  \theta_i<2\theta_j  \ \ {\rm for \ \ all} \ \ j\neq i \\
&{\rm saddle}, ~~& {\rm if}  \ \  {\rm otherwise}
\end{array}\right.$$

(c) \ \ if $\theta_i\theta=\frac{2n-1}{2}$ for some $i=\overline{1,n}$ then the fixed point $(\overline{x}_{1},\overline{x}_{2},...,\overline{x}_{n})-$ is a non-hyperbolic fixed point, where $\theta=\sum_{j=1}^n\frac{1}{\theta_j}$ and  $\overline{x}_{i}$ are defined as in (\ref{h6});

(d) \ \ if $\theta_i\widetilde{\theta}=\frac{2(n-r)-1}{2}$ for some $i=\overline{1,n}$ then the fixed point with $r$ zero coordinates $\widetilde{x}_{s_1},...,\widetilde{x}_{s_r}$ and $n-r$ non-zero coordinates $\widetilde{x}_{s_{r+1}},...,\widetilde{x}_{s_n}-$ is a non-hyperbolic fixed point, where $\widetilde{\theta}=\sum_{j=r+1}^n\frac{1}{\theta_{s_j}}$  and  $\widetilde{x}_{s_i}$ is defined as in (\ref{h8}) for $i=r+1,...,n.$
\end{prop}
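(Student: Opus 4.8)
The plan is to treat the four families of fixed points separately, reducing (c) and (d) to a single structural identity for the Jacobian (\ref{jacgeneral}). For (a): $J(0,\dots ,0)$ is the zero matrix, its only eigenvalue is $0$, so the origin is attracting. For (b): evaluate (\ref{jacgeneral}) at $p=(0,\dots ,\tfrac{2}{\theta _i},\dots ,0)$, noting $\sum _jx_j=\tfrac{2}{\theta _i}$ there. Then every row $j\neq i$ has all off-diagonal entries $\theta _jx_j=0$ and diagonal entry $\tfrac{2\theta _j}{\theta _i}$, while row $i$ equals $(2,2,\dots ,2)$. Expanding the characteristic determinant along the rows $j\neq i$ one at a time gives $\det(\lambda I-J(p))=(\lambda-2)\prod _{j\neq i}\bigl(\lambda-\tfrac{2\theta _j}{\theta _i}\bigr)$, so the spectrum is $\{2\}\cup\{\,2\theta _j/\theta _i:j\neq i\,\}$. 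Since $2>1$ always, $p$ is repelling exactly when every $2\theta _j/\theta _i>1$, i.e. $\theta _i<2\theta _j$ for all $j\neq i$; it has an eigenvalue on the unit circle exactly when $\theta _i=2\theta _j$ for some $j$; and otherwise some $\theta _i>2\theta _j$ gives $2\theta _j/\theta _i<1$ while $2>1$, so $p$ is a saddle. This is the trichotomy in (b).

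For (c) I would first put the positive fixed point in a convenient form. From (\ref{h5}), with $S=\sum _j\overline{x}_j$, one gets $2S-\overline{x}_k=\tfrac{2}{\theta _k}$, hence $\overline{x}_k=2S-\tfrac{2}{\theta _k}$; summing over $k$ yields $S=\tfrac{2\theta}{2n-1}$ with $\theta=\sum _j\tfrac{1}{\theta _j}$. Substituting into (\ref{jacgeneral}) and using $\theta _k\overline{x}_k=2\theta _kS-2=2(\theta _kS-1)$ for the off-diagonal of row $k$ and $\theta _kS=1+(\theta _kS-1)$ for its diagonal, one checks the clean identity $J(\overline{x})=I+CM$, where $C=\mathrm{diag}(\theta _kS-1)=\tfrac{1}{2n-1}\,\mathrm{diag}\bigl(2\theta _k\theta-(2n-1)\bigr)$ and $M=2\mathbf{1}\mathbf{1}^{\top}-I$ is precisely the matrix whose determinant was computed in (\ref{det1}). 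The hypothesis $\theta _i\theta=\tfrac{2n-1}{2}$ says exactly that $C_{ii}=0$; then row $i$ of $CM$ vanishes, so $0$ is an eigenvalue of $CM$, hence $1$ is an eigenvalue of $I+CM=J(\overline{x})$, and $\overline{x}$ is non-hyperbolic. (En passant, $M\overline{x}=2S\mathbf{1}-\overline{x}$ and $C_{kk}(2S-\overline{x}_k)=(\theta _kS-1)\tfrac{2}{\theta _k}=\overline{x}_k$, so $CM\overline{x}=\overline{x}$ and $J(\overline{x})\overline{x}=2\overline{x}$: the eigenvalue $2$, which also follows from Euler's identity for homogeneous functions since $H$ has degree two.)

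For (d) I would run the same computation on the face where only $x_{s_{r+1}},\dots ,x_{s_n}$ are nonzero. At $\widetilde{x}$ one has, by (\ref{h7}), $\sum _jx_j=\widetilde{S}=\tfrac{2\widetilde{\theta}}{2(n-r)-1}$ with $\widetilde{\theta}=\sum _{j=r+1}^{n}\tfrac{1}{\theta _{s_j}}$ and $\widetilde{x}_{s_k}=2\widetilde{S}-\tfrac{2}{\theta _{s_k}}$. Ordering the coordinates so that the $r$ zero ones come first, (\ref{jacgeneral}) becomes block lower-triangular with diagonal blocks $D_0$, the diagonal matrix whose entries are $\theta _k\widetilde{S}$ for the zero coordinates $k$ (those rows of (\ref{jacgeneral}) have vanishing off-diagonal entries), and $J_+$, which coincides with the Jacobian of the $(n-r)$-dimensional positive problem, so $J_+=I+C_+M_+$ as in (c) with $C_+=\tfrac{1}{2(n-r)-1}\,\mathrm{diag}\bigl(2\theta _k\widetilde{\theta}-(2(n-r)-1)\bigr)$. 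Hence the spectrum of $J(\widetilde{x})$ is the set of numbers $\theta _k\widetilde{S}$ over the zero coordinates together with the spectrum of $J_+$. If the index $i$ with $\theta _i\widetilde{\theta}=\tfrac{2(n-r)-1}{2}$ is a zero coordinate, then $\theta _i\widetilde{S}=1$ is an eigenvalue; if it is a nonzero coordinate, then $(C_+)_{ii}=0$ and the argument of (c) gives $1$ in the spectrum of $J_+$. Either way $1$ is an eigenvalue, so $\widetilde{x}$ is non-hyperbolic.

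Parts (a) and (b) are immediate; the only point needing care in (b) is checking that the residual case really produces a saddle, which the eigenvalue $2$ and the estimate $2\theta _j/\theta _i<1$ settle. The routine but error-prone step is the algebra turning (\ref{jacgeneral}) into $I+CM$ (and into its block form in (d)): the thing to keep straight is that the off-diagonal entries of row $k$ are $\theta _k\overline{x}_k=2C_{kk}$ while the diagonal entry is $\theta _kS=1+C_{kk}$, and, in (d), that the zero-coordinate rows decouple and contribute the isolated eigenvalues $\theta _k\widetilde{S}$. Beyond this bookkeeping I anticipate no real obstacle.
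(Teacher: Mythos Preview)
Your argument is correct and, for parts (a)--(c), rests on the same observation the paper uses: once the Jacobian at $\overline{E}$ is written out (the paper's matrix (\ref{jacmn})), the hypothesis $\theta_i\theta=\tfrac{2n-1}{2}$ forces row $i$ to have diagonal entry $1$ and off-diagonal entries $0$, so $1$ is an eigenvalue. The paper simply records (\ref{jacmn}) and says the conclusion follows; you have repackaged the same computation as the identity $J(\overline{x})=I+CM$ with $C=\mathrm{diag}(\theta_kS-1)$ and $M=2\mathbf{1}\mathbf{1}^{\top}-I$, which is a genuinely cleaner way to see it. Your treatment of (d) is also more explicit than the paper's ``similarly'': you split into the block-triangular form and handle separately the case where the distinguished index $i$ lies among the zero coordinates (giving the isolated eigenvalue $\theta_i\widetilde{S}=1$) and among the nonzero ones (reducing to (c) for $J_+$).

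The main payoff of your decomposition is the side remark: $CM\overline{x}=\overline{x}$, equivalently the degree-two Euler identity, gives $J(\overline{x})\overline{x}=2\overline{x}$ in one line. The paper proves this same fact later (Theorem~\ref{thm1}) by a considerably longer expansion of $\det(J(\overline{E})-2I)$ into $n+1$ determinants. So while your route to the proposition itself is essentially the paper's, the $I+CM$ framing buys you a much shorter proof of the subsequent theorem as well.
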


\begin{proof} From the Jacobian (\ref{jacgeneral}) the proof of statements $(a)$ and $(b)$ straightforward.  Let $\overline{x}_{i}$ be coordinates of a non-zero fixed point and are defined by (\ref{h6}). Then,  simplifying the Jacobian  (\ref{jacgeneral}) at this fixed point, we get

\begin{equation}
J(x)=\begin{bmatrix}
\frac{2\theta_1\theta}{2n-1} & \frac{4\theta_1\theta}{2n-1}-2 & \cdots & \frac{4\theta_1\theta}{2n-1}-2  \\
\frac{4\theta_2\theta}{2n-1}-2  & \frac{2\theta_2\theta}{2n-1}& \cdots & \frac{4\theta_2\theta}{2n-1}-2 \\
\vdots &  \vdots& \vdots & \vdots \\
\frac{4\theta_n\theta}{2n-1}-2  & \frac{4\theta_n\theta}{2n-1}-2  & \cdots & \frac{2\theta_n\theta}{2n-1}
\end{bmatrix}\label{jacmn}
\end{equation}
and from this the proof of statement $(c)$ easily follows. Similarly, the proof of statement $(d)$ follows by considering $r$ coordinates are zero (for simplicity, first $r$ coordinates), and $n-r$ coordinates are defined as (\ref{h8}). Thus the proof is complete.
\end{proof}

Based on the Proposition \ref{propn3} we conjectured that for any $n\geq2,$ one eigenvalue is always 2 of the Jacobian matrix  at the positive fixed point (for $n=2$ this is easy to check).

\begin{thm}\label{thm1} Let $J(x)$ be a Jacobian of (\ref{h4}) and $\overline{E}$ be a non-zero fixed point whose coordinates are defined by (\ref{h6}). Then for any $n\geq2$ one eigenvalue of $J(\overline{E})$ is 2.
\end{thm}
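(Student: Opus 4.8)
The plan is to argue directly from the simplified Jacobian (\ref{jacmn}) and to show that $2$ is a root of its characteristic polynomial by proving $\det\!\big(J(\overline{E})-2I\big)=0$. Write $c_i=\frac{2\theta_i\theta}{2n-1}$, so that (\ref{jacmn}) has diagonal entries $c_i$ and every off-diagonal entry in row $i$ equal to $2c_i-2$. Then row $i$ of $J(\overline{E})$ is $2(c_i-1)\mathbf 1^{\mathsf T}+(2-c_i)\,e_i^{\mathsf T}$, i.e.
\[
J(\overline{E})=\mathbf a\,\mathbf 1^{\mathsf T}+\mathrm{diag}(\mathbf b),\qquad a_i=2(c_i-1),\ \ b_i=2-c_i,
\]
a rank-one perturbation of a diagonal matrix. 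The key simplification is that subtracting $2I$ turns the diagonal part into $\mathrm{diag}(b_i-2)=-\mathrm{diag}(\mathbf c)$, so $J(\overline{E})-2I=\mathbf a\,\mathbf 1^{\mathsf T}-\mathrm{diag}(\mathbf c)$.

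Next I would note that all $\theta_i>0$ and $\theta>0$, hence $c_i>0$, so $\mathrm{diag}(\mathbf c)$ is invertible and the matrix determinant lemma applies:
\[
\det\!\big(J(\overline{E})-2I\big)=(-1)^n\Big(\prod_{i=1}^n c_i\Big)\Big(1-\sum_{i=1}^n\frac{a_i}{c_i}\Big).
\]
Thus everything reduces to the single identity $\sum_i a_i/c_i=1$. Since $\frac{a_i}{c_i}=2-\frac{2}{c_i}=2-\frac{2n-1}{\theta_i\theta}$, one has
\[
\sum_{i=1}^n\frac{a_i}{c_i}=2n-\frac{2n-1}{\theta}\sum_{i=1}^n\frac{1}{\theta_i}=2n-(2n-1)=1,
\]
using exactly the defining relation $\theta=\sum_{j}\theta_j^{-1}$. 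Hence $\det\!\big(J(\overline{E})-2I\big)=0$, which is the claim.

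If one prefers to avoid the determinant lemma, the same structure yields an explicit eigenvector: put $v_i=\frac{a_i}{c_i}=2-\frac{2n-1}{\theta_i\theta}$, so that $\sum_j v_j=1$ by the computation above. Reading off row $i$ of $J(\overline{E})$ gives $(J(\overline{E})v)_i=c_iv_i+(2c_i-2)(1-v_i)=(2-c_i)v_i+2c_i-2$, and substituting $v_i=2-2/c_i$ collapses this to $2v_i$; moreover $v\neq 0$, since $v\equiv 0$ would force every $\theta_i$ equal to the common value $\frac{2n-1}{2\theta}$, and then $\theta=\sum\theta_i^{-1}$ gives the contradiction $2n=2n-1$. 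I do not anticipate a genuine obstacle here; the only things to get right are recognizing the rank-one-plus-diagonal form of (\ref{jacmn}) and exploiting the arithmetic identity $\sum_i\theta_i^{-1}=\theta$, which is precisely what makes the row-contributions of the perturbation sum to $1$.
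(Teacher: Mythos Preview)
Your argument is correct and complete. Both the determinant--lemma route and the explicit eigenvector are valid; the check that $v\neq 0$ is handled properly.

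The paper proves the same identity $\det(J(\overline{E})-2I)=0$ but by a considerably longer direct computation: it uses multilinearity in the columns to split the determinant into a sum $A+B_1+\cdots+B_n$, evaluates $A$ via the closed form (\ref{det1}) for the all--$1$/$2$ matrix, evaluates each $B_k$ by row/column operations and the auxiliary determinant (\ref{det2}), and then checks by hand that the pieces cancel. Your approach replaces all of this by the single structural observation that (\ref{jacmn}) is a diagonal matrix plus the rank-one matrix $\mathbf a\,\mathbf 1^{\mathsf T}$, after which the matrix determinant lemma reduces the claim to the one-line arithmetic identity $\sum_i(2-\tfrac{2n-1}{\theta_i\theta})=1$. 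This is shorter and more conceptual; as a bonus, your eigenvector $v_i=2-\tfrac{2n-1}{\theta_i\theta}$ is, up to the scalar $\tfrac{2n-1}{2\theta}$, exactly $\overline{x}_i$, which anticipates and explains Proposition~\ref{prop15} on the unstable line through the origin and $\overline{E}$. The only trade-off is that the paper's computation is self-contained at the level of elementary determinant manipulations, whereas yours invokes the determinant lemma---but you also supply the eigenvector argument, which avoids it.
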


\begin{proof} The idea is to show that the determinant of a matrix  $J(\overline{E})-2I$ is zero.
Using the property of the determinant, we express the main determinant as a sum of $n+1$ determinants:

\[
\begin{vmatrix}
\frac{2\theta_1\theta}{2n-1}-2 & \frac{4\theta_1\theta}{2n-1}-2 & \cdots & \frac{4\theta_1\theta}{2n-1}-2  \\
\frac{4\theta_2\theta}{2n-1}-2  & \frac{2\theta_2\theta}{2n-1}-2& \cdots & \frac{4\theta_2\theta}{2n-1}-2 \\
\vdots &  \vdots& \vdots & \vdots \\
\frac{4\theta_n\theta}{2n-1}-2  & \frac{4\theta_n\theta}{2n-1}-2  & \cdots & \frac{2\theta_n\theta}{2n-1}-2
\end{vmatrix}=A+B,
\]
where
\[
A=
\begin{vmatrix}
\frac{2\theta_1\theta}{2n-1} & \frac{4\theta_1\theta}{2n-1} & \cdots & \frac{4\theta_1\theta}{2n-1}  \\
\frac{4\theta_2\theta}{2n-1}  & \frac{2\theta_2\theta}{2n-1}& \cdots & \frac{4\theta_2\theta}{2n-1} \\
\vdots &  \vdots& \vdots & \vdots \\
\frac{4\theta_n\theta}{2n-1}  & \frac{4\theta_n\theta}{2n-1}  & \cdots & \frac{2\theta_n\theta}{2n-1}
\end{vmatrix}
\]
and
\[
\begin{split}
B&=
\begin{vmatrix}
-2 & \frac{4\theta_1\theta}{2n-1} & \cdots & \frac{4\theta_1\theta}{2n-1}  \\
-2 & \frac{2\theta_2\theta}{2n-1}& \cdots & \frac{4\theta_2\theta}{2n-1} \\
\vdots &  \vdots& \vdots & \vdots \\
-2  & \frac{4\theta_n\theta}{2n-1}  & \cdots & \frac{2\theta_n\theta}{2n-1}
\end{vmatrix}+
\begin{vmatrix}
\frac{2\theta_1\theta}{2n-1} & -2 & \cdots & \frac{4\theta_1\theta}{2n-1}  \\
\frac{4\theta_2\theta}{2n-1}  & -2& \cdots & \frac{4\theta_2\theta}{2n-1} \\
\vdots &  \vdots& \vdots & \vdots \\
\frac{4\theta_n\theta}{2n-1}  & -2  & \cdots & \frac{2\theta_n\theta}{2n-1}
\end{vmatrix}+\\
&+\cdots+
\begin{vmatrix}
\frac{2\theta_1\theta}{2n-1} & \frac{4\theta_1\theta}{2n-1} & \cdots & -2  \\
\frac{4\theta_2\theta}{2n-1}  & \frac{2\theta_2\theta}{2n-1}& \cdots & -2 \\
\vdots &  \vdots& \vdots & \vdots \\
\frac{4\theta_n\theta}{2n-1}  & \frac{4\theta_n\theta}{2n-1}  & \cdots & -2
\end{vmatrix}=B_1+B_2+...+B_n.
\end{split}
\]

\hfill

Note that all other  determinants are zero because they contain two identical columns which elements are -2. By the (\ref{det1}), we obtain

\[
A=\frac{(-1)^{n-1}2^n\theta^n\prod_{j=1}^{n}\theta_j}{(2n-1)^{n-1}}.
\]

In addition, it is easy to see that

\begin{equation}\label{det2}
\begin{vmatrix}
           2 & 2 & 2 &\cdots & 2 \\
           2 & 1 & 2 &\cdots & 2 \\
           2 & 2 & 1 &\cdots & 2 \\
           \vdots & \vdots & \vdots & \vdots&\vdots \\
           2 & 2 & 2& \cdots & 1 \\
         \end{vmatrix}=(-1)^{n-1}\cdot2.
\end{equation}

Using the determinants (\ref{det1}) and (\ref{det2}), if we calculate the terms of $B,$  then we get
\[
B_1=(-2)\cdot\frac{2^{n-1}\theta^{n-1}}{(2n-1)^{n-1}}\left((-1)^{n-2}(2n-3)\prod_{\substack{j=1 \\ j\neq 1}}^{n}\theta_j+(-1)^{n-1}\cdot2\sum_{\substack{i=1 \\ i\neq 1}}^{n}\prod_{\substack{j=1 \\ j\neq i}}^{n}\theta_j\right).
\]

In $B_2$ we swap the first and second columns, and then the first and second rows and similarly to $B_1,$ we get
 \[
B_2=(-2)\cdot\frac{2^{n-1}\theta^{n-1}}{(2n-1)^{n-1}}\left((-1)^{n-2}(2n-3)\prod_{\substack{j=1 \\ j\neq 2}}^{n}\theta_j+(-1)^{n-1}\cdot2\sum_{\substack{i=1 \\ i\neq 2}}^{n}\prod_{\substack{j=1 \\ j\neq i}}^{n}\theta_j\right).
\]
etc.

If we find the sum of $B_k, k=\overline{1,n}$ then we get

$B=\sum_{k=1}^{n}B_k=(-2)\cdot\frac{2^{n-1}\theta^{n-1}}{(2n-1)^{n-1}}\left((-1)^{n-2}(2n-3)+(-1)^{n-1}(2n-2)\right)\sum_{i=1}^{n}\prod_{\substack{j=1 \\ j\neq i}}^{n}\theta_j=\\
=-\frac{(-1)^{n-1}2^{n}\theta^{n-1}}{(2n-1)^{n-1}}\sum_{i=1}^{n}\prod_{\substack{j=1 \\ j\neq i}}^{n}\theta_j.
$

On the other hand,
\[
\sum_{i=1}^{n}\prod_{\substack{j=1 \\ j\neq i}}^{n}\theta_j=\theta\prod_{j=1}^{n}\theta_j
\]

Therefore, $A+B=0.$ This completes the proof of the theorem.
\end{proof}

This theorem can be applied to all remaining fixed points of the operator except the origin.

\begin{cor} The Jacobian matrix of the operator (\ref{h4}) has an eigenvalue equals to 2 at any fixed point except the origin.
\end{cor}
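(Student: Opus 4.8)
The plan is to reduce the statement for the fixed points not yet covered by Theorem~\ref{thm1} — namely those of types (ii) and (iv), which have some zero and some nonzero coordinates — to Theorem~\ref{thm1} itself, by exploiting a block‑triangular structure of the Jacobian. The positive fixed point $\overline{E}$ is already handled by Theorem~\ref{thm1}, so only the mixed fixed points remain.

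First I would fix such a fixed point and, after relabelling the coordinates, assume that the first $r$ coordinates vanish and the remaining $m:=n-r\ge 1$ coordinates equal $\widetilde{x}_{s_{r+1}},\dots,\widetilde{x}_{s_n}$ from (\ref{h8}). Inspecting (\ref{jacgeneral}): since $x_k=0$ for $k\le r$, every off‑diagonal entry $\theta_k x_k$ in rows $1,\dots,r$ vanishes, so the upper‑right $r\times m$ block of $J$ is the zero matrix. Hence, with respect to the index splitting $\{1,\dots,r\}\cup\{r+1,\dots,n\}$, the matrix $J$ is block lower‑triangular, and $\det(J-\lambda I)$ factors as $\det(D-\lambda I)\cdot\det(J_{\mathrm{sub}}-\lambda I)$, where $D$ is the $r\times r$ (diagonal) top‑left block and $J_{\mathrm{sub}}$ is the bottom‑right $m\times m$ block.

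Next I would identify $J_{\mathrm{sub}}$. Because all the vanishing coordinates drop out of the sum, $\sum_{i=1}^n x_i=\sum_{i=r+1}^n x_{s_i}$ at this point, so comparing with (\ref{jacgeneral}) shows that $J_{\mathrm{sub}}$ is precisely the Jacobian of the $m$‑dimensional operator of the form (\ref{h2}) with parameters $\theta_{s_{r+1}},\dots,\theta_{s_n}$, evaluated at its own positive fixed point — and that fixed point is exactly the vector with components given by (\ref{h8}). If $m\ge 2$, Theorem~\ref{thm1} applies to this smaller operator and yields that $2$ is an eigenvalue of $J_{\mathrm{sub}}$, hence of $J$. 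If $m=1$ (the type~(ii) points), $J_{\mathrm{sub}}$ is the $1\times1$ matrix $\theta_{s_n}\cdot(2/\theta_{s_n})=2$, so again $2$ is an eigenvalue. This covers every fixed point except the origin.

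The only point requiring care is the bookkeeping in the relabelling and the verification that the bottom‑right block genuinely coincides with the lower‑dimensional Jacobian evaluated at the lower‑dimensional positive fixed point; this is routine, but worth doing explicitly, since the entries in (\ref{h8}) were defined exactly so that this identification holds. No new estimate is needed beyond Theorem~\ref{thm1}, so I expect no real obstacle — the content of the corollary is essentially the observation that the operator restricts consistently to coordinate subspaces.
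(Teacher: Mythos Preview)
Your argument is correct and is exactly the approach the paper intends: the paper does not spell out a proof for this corollary but merely remarks that ``this theorem can be applied to all remaining fixed points of the operator except the origin,'' and your block lower-triangular reduction to the lower-dimensional operator (with the $m=1$ case handled directly) is precisely how one makes that remark rigorous. There is no gap and no difference in strategy; you have simply written out the details the paper omits.
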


\begin{cor} The operator (\ref{h4}) does not have any attracting fixed point except the origin.
\end{cor}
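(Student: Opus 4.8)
The plan is to reduce this statement immediately to the preceding Corollary via the standard linearization criterion for discrete dynamical systems. Recall that if $p$ is a fixed point of a $C^1$ map and $p$ is attracting (locally asymptotically stable), then every eigenvalue $\lambda$ of the Jacobian $J(p)$ must satisfy $|\lambda|<1$; equivalently, the spectral radius of $J(p)$ is strictly less than $1$. This is the contrapositive of the instability half of the Grobman--Hartman / linearized stability theorem, so I would simply invoke it.

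The argument then proceeds in two steps. First, by the previous Corollary, at every fixed point of \eqref{h4} other than the origin the Jacobian matrix has $2$ as an eigenvalue. Since $|2|=2>1$, the spectral radius of the Jacobian at such a point is at least $2$, so by the criterion above none of these fixed points can be attracting. Second, one notes (as already observed in the low-dimensional propositions and in part (a) of the general Proposition) that $J(0,\ldots,0)$ is the zero matrix, whose only eigenvalue is $0$; hence the origin \emph{is} attracting. Combining the two steps gives the claim.

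There is essentially no obstacle here: the content is entirely carried by the earlier Corollary on the eigenvalue $2$, and the only thing to be careful about is stating the hypotheses of the linearization theorem correctly (the map $H$ in \eqref{h2} is polynomial, hence $C^\infty$, so the theorem applies at every fixed point). If one wished to be fully self-contained one could instead note directly that an eigenvector $v$ with $J(p)v=2v$ yields a direction along which the linearization expands, and invoke the stable/unstable manifold theorem to produce nearby orbits leaving any small neighbourhood of $p$; but for the purposes of this corollary the spectral-radius criterion suffices.
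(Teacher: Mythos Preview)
Your proposal is correct and is exactly the argument the paper intends: the corollary is stated without proof as an immediate consequence of the preceding one, and your write-up simply spells out that implication (eigenvalue $2$ forces instability, while $J(0)=0$ gives attraction at the origin). The only minor wording issue is that ``attracting $\Rightarrow$ every eigenvalue has $|\lambda|<1$'' should strictly be $|\lambda|\le 1$; but since the relevant eigenvalue is $2>1$, the instability criterion you cite already gives what is needed.
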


\section{global dynamics}

Let's consider the low-dimensional case again with $n=2.$  Recall that the operator (\ref{h9}) has the form:

\[
H: \left\{ \begin{aligned}x_{1}' & =\frac{\theta_1 x_1}{2}(x_1+2x_2) \medskip\\
x_{2}' & =\frac{\theta_2 x_2}{2}(x_2+2x_1).
\end{aligned}
\right.
\]

Fixed points are

\[
E_0=(0;0), \ \ E_1=\left(\frac{2}{\theta_1};0\right), \ \ E_2=\left(0; \frac{2}{\theta_2}\right), \ \ \overline{E}=\left(\frac{4\theta_1-2\theta_2}{3\theta_1\theta_2};\frac{4\theta_2-2\theta_1}{3\theta_1\theta_2}\right)
\]

\begin{lem}\label{lem2} Let $\theta_1, \theta_2\in(0,\infty)$. Then the following sets are invariant with respect to operator (\ref{h9}):

(i) If $\theta_1<2\theta_2$ and $\theta_2<2\theta_1$ then
\[
M_1=\left\{(x_1,x_2)\in \mathbb{R}_{+}^2: x_1+2x_2\leq\frac{2}{\theta_1}, \ \ x_2+2x_1\leq\frac{2}{\theta_2} \right\},
\]

\[
M_2=\left\{(x_1,x_2)\in \mathbb{R}_{+}^2: x_1+2x_2\geq\frac{2}{\theta_1}, \ \ x_2+2x_1\geq\frac{2}{\theta_2} \right\},
\]

(ii) If $\theta_1>2\theta_2$ then
\[
M_3=\left\{(x_1,x_2)\in R_{+}^2: x_1+2x_2\leq\frac{2}{\theta_1} \right\},
\]

\[
M_4=\left\{(x_1,x_2)\in R_{+}^2:  x_2+2x_1\geq\frac{2}{\theta_2} \right\},
\]

(iii) If $\theta_2>2\theta_1$ then
\[
M_5=\left\{(x_1,x_2)\in R_{+}^2: x_2+2x_1\leq\frac{2}{\theta_2} \right\},
\]

\[
M_6=\left\{(x_1,x_2)\in R_{+}^2:  x_1+2x_2\geq\frac{2}{\theta_1} \right\}.
\]
Moreover, for any initial point $(x_1^{0}, x_2^{0})\in \mathbb{R}^2_+$ (except fixed points) we have

\[
\lim_{n\rightarrow\infty}H^{n}\left(x_1^{0}, x_2^{0}\right) =\left\{\begin{array}{lll}
&(0,0), ~~&{\rm if} \ \  (x_1^{0}, x_2^{0})\in M_i, \ \ i=1,3,5\\
&\infty, ~~& {\rm if} \ \ (x_1^{0}, x_2^{0})\in M_j, \ \ j=2,4,6
\end{array}\right.
\]

\end{lem}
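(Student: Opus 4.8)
The plan is to prove invariance first and then the limiting behaviour, treating the two halves separately. For the invariance of each $M_i$, I would work directly with the defining inequalities and the formulas for $H$. Write $u = x_1 + 2x_2$ and $v = x_2 + 2x_1$; note $x_1' = \frac{\theta_1 x_1}{2}u$ and $x_2' = \frac{\theta_2 x_2}{2}v$. Then compute
\[
u' = x_1' + 2x_2' = \frac{\theta_1 x_1}{2}u + \theta_2 x_2 v,\qquad
v' = x_2' + 2x_1' = \frac{\theta_2 x_2}{2}v + \theta_1 x_1 u.
\]
For a point in $M_1$ (so $u\le 2/\theta_1$ and $v\le 2/\theta_2$), I want $u'\le 2/\theta_1$; using $\theta_1 x_1 u \le \theta_1 x_1\cdot\frac{2}{\theta_1} = 2x_1$ and $\theta_2 x_2 v \le 2x_2$ one gets $u' \le x_1 + 2x_2 = u \le 2/\theta_1$, and symmetrically $v'\le v\le 2/\theta_2$. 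So $M_1$ is invariant, and in fact $u,v$ are nonincreasing along orbits in $M_1$. For $M_2$ the inequalities reverse: $u'\ge u$ and $v'\ge v$, so $M_2$ is invariant with $u,v$ nondecreasing. For $M_3$ (case $\theta_1>2\theta_2$): only the bound $u\le 2/\theta_1$ is assumed, and the computation $u' \le \frac{\theta_1 x_1}{2}u + \theta_2 x_2 v$ must be controlled; here I would use $\theta_2 x_2 v \le \frac{\theta_1}{2} x_2 v$ is \emph{wrong} in general, so instead note $v = x_2 + 2x_1$ and bound $\theta_2 x_2 v$ by something involving $u$; the cleanest route is to observe $u \le 2/\theta_1 < 4/\theta_2$ forces $x_1 < 2/\theta_2$ and to track the single inequality carefully. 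The analogous one-sided estimates handle $M_4, M_5, M_6$. I would present $M_1$ in detail and say the rest are similar with obvious sign changes.

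For the limiting behaviour, the key observation is that on $M_1$ the quantity $u = x_1 + 2x_2$ satisfies $u' \le u$, and more precisely $u' = \frac{\theta_1 x_1}{2}u + \theta_2 x_2 v \le \frac{u}{2}(x_1 + 2x_2)\cdot(\text{something}) $ — actually the sharp statement is $u' \le u\cdot\max\{\frac{\theta_1 u}{2},\frac{\theta_2 v}{2}\cdot\frac{v}{u}\}$, which is messy, so instead I would argue via a simpler Lyapunov-type quantity. On $M_1$, since $0\le \theta_1 x_1 \le \theta_1 u/ ? $... the robust approach: since $u\le 2/\theta_1$ and $v\le 2/\theta_2$, we have $\theta_1 x_1 \le \theta_1 u \le 2$ and $\theta_2 x_2 \le \theta_2 v\le 2$, hence $x_1' = \frac{\theta_1 x_1}{2}u$ and $x_2'=\frac{\theta_2 x_2}{2}v$; if either $x_1$ or $x_2$ is bounded away from the boundary the contraction is strict. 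The clean way is to show $x_1'\le x_1\cdot\frac{\theta_1 u}{2}$ and iterate: either $u_n\to 0$ (done) or $u_n$ decreases to a positive limit $u_\infty$, and then passing to the limit in $u' = \frac{\theta_1 x_1}{2}u + \theta_2 x_2 v$ forces a fixed point of $H$ in $M_1$; the only such point in the interior of $M_1$ is $E_0$ (as $\overline E$ lies on the boundary, where both inequalities are equalities). This rules out a nonzero limit, giving $H^n(x^0)\to(0,0)$ on $M_1$. For $M_2$, symmetrically $u_n$ is nondecreasing; if it stayed bounded the orbit would converge to a fixed point of $H$ in $M_2$, but no fixed point other than $\overline E$ lies in $M_2$ and $\overline E$ is a saddle with $M_2$ not contained in its stable set — more simply, $u' \ge \frac{\theta_1 x_1}{2}u$ and on $M_2$ away from $\overline E$ one shows $u'$ strictly exceeds $u$ by a definite amount forcing $u_n\to\infty$, hence $\|H^n(x^0)\|\to\infty$. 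The cases $M_3,M_5$ (convergence to $0$) and $M_4,M_6$ (escape to infinity) follow the same pattern using the single relevant inequality.

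The main obstacle I anticipate is making the dichotomy in the $M_2$ (and $M_4, M_6$) case fully rigorous: showing that an orbit in $M_2$ cannot converge to $\overline E$ or stall at a finite limit. The quantity $u_n = x_1^{(n)} + 2x_2^{(n)}$ is nondecreasing there, but I must exclude the possibility $u_n \uparrow u_\infty < \infty$. The way I would close this is: if $u_n \to u_\infty<\infty$ then the orbit is bounded, so by compactness it has a convergent subsequence whose limit is a fixed point lying in $M_2$; the only candidate is $\overline E$, which is on the boundary $u = 2/\theta_1$, $v = 2/\theta_2$; but if the orbit ever enters $M_2$ at a point other than $\overline E$ (strict inequality somewhere), a direct estimate shows $u'>u$ strictly at that point and, by invariance plus monotonicity, the orbit stays a fixed distance away from the boundary curves on which $\overline E$ sits — contradiction. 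I would also need to handle orbits starting on the separating lines $u = 2/\theta_1$ or $v = 2/\theta_2$ themselves (for case (i) these form part of the boundary between $M_1$ and $M_2$), checking that $H$ maps them into one of the $M_i$; this is the reason the statement excludes fixed points. A secondary, more routine obstacle is bookkeeping the one-sided estimates for $M_3$ through $M_6$ correctly, since there only one of the two linear constraints is available and the naive bound on the cross term $\theta_j x_j v$ is not immediately of the right sign; I expect this to require rewriting $v$ in terms of $u$ and the single coordinate and using the hypothesis $\theta_1 > 2\theta_2$ (resp. $\theta_2 > 2\theta_1$) at exactly that step.
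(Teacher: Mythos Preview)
Your overall plan---show coordinatewise monotonicity of orbits in each $M_i$, deduce invariance, then use monotone convergence and the location of the fixed points to identify the limits---is exactly the paper's approach. Two remarks.

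First, the paper's invariance argument is one step shorter than your $u',v'$ computation. From $u=x_1+2x_2\le 2/\theta_1$ one reads off directly
\[
x_1'=\frac{\theta_1 x_1}{2}\,u \le x_1,
\]
and likewise $v\le 2/\theta_2$ gives $x_2'\le x_2$. Coordinatewise monotonicity $(x_1',x_2')\le(x_1,x_2)$ then yields $u'\le u$ and $v'\le v$ for free; your expansion $u'=\tfrac{\theta_1 x_1}{2}u+\theta_2 x_2 v$ recovers the same conclusion but with more bookkeeping.

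Second---and this is the genuine gap you flagged but did not close---the one-sided cases $M_3,\dots,M_6$ hinge on a single observation you are missing. In $\mathbb{R}_+^2$ one always has
\[
v\le 2u\quad\text{and}\quad u\le 2v,
\]
since $2u-v=3x_2\ge 0$ and $2v-u=3x_1\ge 0$. Hence when $\theta_1>2\theta_2$,
\[
u\le \tfrac{2}{\theta_1}\ \Longrightarrow\ v\le 2u\le \tfrac{4}{\theta_1}<\tfrac{2}{\theta_2},
\]
so every point of $M_3$ automatically satisfies \emph{both} constraints and the $M_1$ argument applies verbatim; dually $v\ge 2/\theta_2$ forces $u\ge v/2\ge 1/\theta_2>2/\theta_1$, placing $M_4$ inside the $M_2$-type region. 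The paper phrases this geometrically (the triangle cut out by $l_1:u=2/\theta_1$ lies inside the one cut out by $l_2:v=2/\theta_2$), but the content is the two-line inequality above. Once you have it, no separate ``cross-term'' estimate is needed.

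For the limiting behaviour the paper argues essentially as you propose: both $x_1^{(n)}$ and $x_2^{(n)}$ decrease (resp.\ increase) together, so any finite limit is a fixed point in the region; in $M_1$ the only fixed point reachable by a strictly decreasing orbit is $E_0$ (since $\overline E$ lies on the corner $u=2/\theta_1,\ v=2/\theta_2$, an orbit converging to it from within $M_1$ would have to sit at $\overline E$ identically), and in $M_2$ a strictly increasing orbit cannot accumulate at any fixed point for the symmetric reason, so it is unbounded. The paper is terse here; your more careful exclusion argument is correct and fills in what the paper leaves implicit.
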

\begin{proof}

(i) Let  $(x_1,x_2)\in M_1.$ Then we have

\[
\left\{\begin{array}{lll}
0\leq x_1+2x_2\leq\frac{2}{\theta_1} \\
0\leq x_2+2x_1\leq\frac{2}{\theta_2}
\end{array}\right.   \Rightarrow \left\{\begin{array}{lll}
0\leq x_{1}'\leq x_1 \\
0\leq x_{2}'\leq x_2
\end{array}\right. \Rightarrow \left\{\begin{array}{lll}
0\leq x_{1}'+2x_{2}'\leq\frac{2}{\theta_1} \\
0\leq x_{2}'+2x_{1}'\leq\frac{2}{\theta_2}
\end{array}\right.
\]
thus, $(x_{1}',x_{2}')\in M_1.$ Similarly, it can be shown that the set $M_2$ is an invariant.

(ii) Let $\theta_1>2\theta_2.$ Consider two lines  $l_1: x_1+2x_2=\frac{2}{\theta_1}$ and $l_2: x_2+2x_1=\frac{2}{\theta_2}.$  The intersection of $l_1$ with the line $x_2=0$ (resp. $x_1=0$) occurs at $x_1=\frac{2}{\theta_1}$ (resp. $x_2=\frac{1}{\theta_1}$), while the intersection of $l_2$ with the line $x_2=0$ (resp. $x_1=0$) occurs at $x_1=\frac{1}{\theta_2}$ (resp. $x_2=\frac{2}{\theta_2}$).  Comparing them we get $\frac{2}{\theta_1}<\frac{2}{2\theta_2}=\frac{1}{\theta_2}$  (resp. $\frac{1}{\theta_1}<\frac{1}{2\theta_2}<\frac{2}{\theta_2}$). Thus, in the first quadrant,  the line $x_1+2x_2=\frac{2}{\theta_1}$ is located in below from the line $x_2+2x_1=\frac{2}{\theta_2}.$ So, from $x_1+2x_2\leq\frac{2}{\theta_1}$ we obtain that $x_2+2x_1\leq\frac{2}{\theta_2}$ and from this we have $x_{1}'\leq x_1,$ $x_{2}'\leq x_2$ which gives that the set $M_3$ is an invariant. Similarly, from $x_2+2x_1\geq\frac{2}{\theta_2}$  we have that $x_1+2x_2\geq\frac{2}{\theta_1}$ and then we obtain that the set $M_4$ is also an invariant set.

(iii) This case can be proved in the same way as case (ii).

In addition, if $(x_1^{0}, x_2^{0})\in M_i, \ \ i=1,3,5$  (resp. $(x_1^{0}, x_2^{0})\in M_j, \ \ j=2,4,6$) then both sequences $x_1^{(n)}, x_2^{(n)}$ are decreasing (resp. increasing) and bounded from below, so they have limits. Furthermore, since the sequences $x_1^{(n)}$ and $x_2^{(n)}$ either both decrease or both increase simultaneously, we can infer from the positions of the fixed points $E_1, E_2, \overline{E}$ that the trajectory converges to the fixed point $(0,0)$ if $(x_1^{0}, x_2^{0}) \in M_i$ for $i = 1, 3, 5$. Conversely, if $(x_1^{0}, x_2^{0}) \in M_j$ for $j = 2, 4, 6$, the trajectory diverges to infinity. The proof is complete.
\end{proof}

\begin{thm}\label{thm2} Let $(x_1^{0}, x_2^{0})\in \mathbb{R}^2_+$ be an initial point (except fixed points).  Then the following statements hold true

(a) If  $\theta_1<2\theta_2, \theta_2<2\theta_1$ then there exists an invariant curve $\gamma(x_1)$ passing through fixed points $E_1, E_2$ and $\overline{E}$ such that

\[
\lim_{n\rightarrow\infty}H^{n}\left(x_1^{0}, x_2^{0}\right) =\left\{\begin{array}{lll}
&(0,0), ~~&{\rm if} \ \  0\leq x_1^{0}<\frac{2}{\theta_1}, \ \ 0\leq x_2^{0}< \gamma(x_1) \\
&\overline{E},~~& {\rm if} \ \ 0< x_1^{0}<\frac{2}{\theta_1}, \ \ x_2^{0}= \gamma(x_1) \\
& \infty,~~& {\rm if} \ \ x_1^{0}\geq0, \ \ x_2^{0}>\gamma(x_1)
\end{array}\right.
\]

(b) If  $\theta_1>2\theta_2$ then there exists an invariant curve $\sigma(x_1)$ passing through fixed points $E_1$ and $E_2$ such that

\[
\lim_{n\rightarrow\infty}H^{n}\left(x_1^{0}, x_2^{0}\right) =\left\{\begin{array}{lll}
&(0,0), ~~&{\rm if} \ \  0\leq x_1^{0}<\frac{2}{\theta_1}, \ \ 0\leq x_2^{0}< \sigma(x_1) \\
&E_1,~~& {\rm if} \ \ 0< x_1^{0}<\frac{2}{\theta_1}, \ \ x_2^{0}= \sigma(x_1) \\
& \infty,~~& {\rm if} \ \ x_1^{0}\geq0, \ \ x_2^{0}>\sigma(x_1)
\end{array}\right.
\]

(c) If  $\theta_2>2\theta_1$ then there exists an invariant curve $\delta(x_1)$ passing through fixed points $E_1$ and $E_2$ such that

\[
\lim_{n\rightarrow\infty}H^{n}\left(x_1^{0}, x_2^{0}\right) =\left\{\begin{array}{lll}
&(0,0), ~~&{\rm if} \ \  0\leq x_1^{0}<\frac{2}{\theta_1}, \ \ 0\leq x_2^{0}< \delta(x_1) \\
&E_2,~~& {\rm if} \ \ 0< x_1^{0}<\frac{2}{\theta_1}, \ \ x_2^{0}= \delta(x_1) \\
& \infty,~~& {\rm if} \ \ x_1^{0}\geq0, \ \ x_2^{0}>\delta(x_1)
\end{array}\right.
\]

\end{thm}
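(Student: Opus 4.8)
The whole argument is driven by the fact that $H$ is \emph{order-preserving} on $\mathbb{R}_{+}^{2}$: every entry of the Jacobian (\ref{jac}) is nonnegative on the first quadrant, so $x\le y$ (coordinatewise) implies $H(x)\le H(y)$ and hence $H^{m}(x)\le H^{m}(y)$ for all $m$. Let $\mathcal{B}_{0}$ and $\mathcal{B}_{\infty}$ be the sets of points whose trajectory tends to $(0,0)$, respectively diverges to infinity. The plan is to show $\mathbb{R}_{+}^{2}=\mathcal{B}_{0}\sqcup\Gamma\sqcup\mathcal{B}_{\infty}$, where $\Gamma$ is the graph of the desired curve. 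First, since the Jacobian at $(0,0)$ is the zero matrix the origin is super-attracting, so $\mathcal{B}_{0}$ is open; and a one-line estimate gives $\{x_{1}+x_{2}\ge c\}\subset M_{2}$ for $c=\max\{2/\theta_{1},2/\theta_{2}\}$ (and $\subset M_{4}$, $M_{6}$ in cases (b), (c)), so by Lemma \ref{lem2} any unbounded trajectory enters this set and then diverges; hence $\mathcal{B}_{\infty}=\bigcup_{m}(H^{m})^{-1}\{x_{1}+x_{2}>c\}$ is open and every non-divergent trajectory is bounded. Monotonicity then makes $\mathcal{B}_{0}$ downward closed and $\mathcal{B}_{\infty}$ upward closed.

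Next I would build $\gamma$ by slicing. The axes are invariant and there $H$ acts as $x\mapsto\frac{\theta_{i}}{2}x^{2}$, so $[0,\frac{2}{\theta_{1}})\times\{0\}\subset\mathcal{B}_{0}$, $(\frac{2}{\theta_{1}},\infty)\times\{0\}\subset\mathcal{B}_{\infty}$, and symmetrically on the other axis. For each $x_{1}\in[0,\frac{2}{\theta_{1}})$ the set $\{t\ge0:(x_{1},t)\in\mathcal{B}_{0}\}$ is a nonempty interval containing $0$ that is bounded above, so
\[
\gamma(x_{1}):=\sup\{t\ge0:(x_{1},t)\in\mathcal{B}_{0}\}\in(0,\infty);
\]
downward closedness makes $\gamma$ non-increasing with $\gamma(0)=\frac{2}{\theta_{2}}$ and $\gamma(\frac{2}{\theta_{1}})=0$, so its graph $\Gamma$ runs from $E_{2}$ to $E_{1}$. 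Two consequences are immediate: if $x_{2}^{0}<\gamma(x_{1}^{0})$ then $(x_{1}^{0},x_{2}^{0})$ lies below some point of $\mathcal{B}_{0}$ on the same vertical line, hence belongs to $\mathcal{B}_{0}$; and openness of the two basins forces $(x_{1}^{0},\gamma(x_{1}^{0}))$ into neither basin, i.e. $\Gamma$ is contained in the undecided set $\mathcal{N}:=\mathbb{R}_{+}^{2}\setminus(\mathcal{B}_{0}\cup\mathcal{B}_{\infty})$.

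The core of the proof is then the claim that every bounded trajectory that does not converge to $(0,0)$ must converge to $\overline{E}$ (to $E_{1}$ in case (b), to $E_{2}$ in case (c)). Here I would use that $H$ is order-preserving — a planar monotone map has no nontrivial recurrence — to see that the $\omega$-limit set of such a trajectory, a nonempty compact connected $H$-invariant set, reduces to a single fixed point; since the fixed points are isolated, one then only has to exclude every candidate but $\overline{E}$. The point $E_{0}$ is excluded because it would put the trajectory in $\mathcal{B}_{0}$; the corners $E_{1},E_{2}$ are repelling in case (a) (Proposition \ref{propn2}), so only the constant orbits at them converge to them, and they are not $\omega$-limits of a non-constant orbit; this leaves $\overline{E}$, which is a hyperbolic saddle (the quantities $F(1)<0$, $F(-1)>0$ of Proposition \ref{propn2}(d) put its eigenvalues off the unit circle), whose stable set $W^{s}(\overline{E})$ is therefore a $C^{1}$ curve. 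Concretely the exclusion can be made with the geometry of $l_{1},l_{2}$: an orbit cannot remain forever in the lens $R_{1}$ between the lines on the $E_{1}$ side, nor in the lens $R_{2}$ on the $E_{2}$ side, since in each of them one coordinate is monotone increasing and the other monotone decreasing, forcing convergence to a repelling corner or to $\overline{E}$; so it eventually falls into $M_{1}$ and converges to $(0,0)$, unless it converges to $\overline{E}$. Thus $\mathcal{N}=W^{s}(\overline{E})$. Since $W^{s}(\overline{E})$ is a curve it contains no vertical segment; but if a point of $\mathcal{N}$ had $x_{2}^{0}>\gamma(x_{1}^{0})$, monotonicity would squeeze the whole vertical segment above $\gamma(x_{1}^{0})$ up to $x_{2}^{0}$ into $\mathcal{N}$ — a contradiction. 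Hence $\mathcal{N}=\Gamma$: the three alternatives of statement (a) hold, $\Gamma$ is invariant because $W^{s}(\overline{E})$ is, $\overline{E}\in\Gamma$ because it is a fixed point, and $\gamma$ is continuous because a jump would again produce a vertical segment inside $\Gamma$. Cases (b) and (c) are identical with $E_{1}$ (resp. $E_{2}$) playing the role of the saddle and $M_{3},M_{4}$ (resp. $M_{5},M_{6}$) that of $M_{1},M_{2}$, the geometry being simpler because $M_{3}\cup M_{4}$ (resp. $M_{5}\cup M_{6}$) misses only a single strip.

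The step I expect to be the real obstacle is the dichotomy for bounded interior orbits — proving that such an orbit must converge to a fixed point and cannot wander, and identifying the stable set of the saddle with the \emph{entire} curve rather than a local arc. The order-preserving structure is the right conceptual tool, but I would make it elementary through the funnelling above, so that the invariant lines $l_{1},l_{2}$ and the sets $M_{i}$ of Lemma \ref{lem2} leave a non-convergent orbit nowhere to go except into one of the two basins.
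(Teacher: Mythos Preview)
Your argument is correct and considerably more thorough than the paper's, but it takes a genuinely different route. The paper's proof is only a few lines: it observes that in case~(a) the point $\overline{E}$ is a saddle while $E_{1},E_{2}$ are repelling, invokes ``general theory'' for the existence of the stable curve through $E_{1},E_{2},\overline{E}$, locates it in the two triangular regions $S_{1}\cup S_{2}=\mathbb{R}_{+}^{2}\setminus(M_{1}\cup M_{2})$ using Lemma~\ref{lem2}, and then simply asserts that orbits below it go to the origin and orbits above it diverge (``Obviously, \ldots''); cases (b) and (c) are dispatched in one sentence each by swapping which fixed point is the saddle. In particular, the paper never mentions or uses the order-preserving property of $H$.

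What you do differently is to \emph{construct} the separatrix rather than import it from stable-manifold theory: you exploit the nonnegativity of the Jacobian to make $\mathcal{B}_{0}$ downward closed and $\mathcal{B}_{\infty}$ upward closed, define $\gamma$ as a pointwise supremum on vertical fibres, and then pin down the residual set $\mathcal{N}$ by the vertical-segment argument. This buys you an honest proof of the trichotomy (which the paper leaves to the reader) and automatically delivers the endpoint behaviour $\gamma(0)=2/\theta_{2}$, $\gamma(2/\theta_{1}^{-})=0$, i.e.\ the passage through $E_{1}$ and $E_{2}$, which the paper again only asserts. The price is the step you yourself flag as the obstacle---showing that every bounded orbit outside $\mathcal{B}_{0}$ converges to the saddle---and your funnelling through the lens regions between $l_{1}$ and $l_{2}$ is exactly the kind of elementary replacement for monotone-recurrence theory that makes this self-contained; the paper avoids the issue entirely by treating the stable manifold as a black box.
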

\begin{proof} (a) Let $\theta_1<2\theta_2, \theta_2<2\theta_1.$  Note that the positive fixed point $\overline{E}$ is saddle and fixed points $E_1, E_2$  are repelling. According to Lemma \ref{lem2}, the dynamics remains unknown in two triangular regions $S_1, S_2$, as in Fig.\ref{fig1}. From general theory there exists an invariant curve $\gamma(x_1)$ passing through the fixed points $E_1, E_2, \overline{E}$ which is the trajectory starting on it, converges to $\overline{E}$ (stable curve w.r.t $\overline{E}$). Since the trajectory starting from $M_1$ (resp. $M_2$) converges to the origin (resp. diverges to the infinity) we have that $\gamma(x_1)$ is located in $\mathbb{R}^2_+\setminus (M_1\cup M_2)$ (in $S_1\cup S_2$). Obviously, the trajectory that begins below $\gamma(x_1)$ converges to the origin, while the trajectory that starts above $\gamma(x_1)$ diverges to infinity.

(b) Let $\theta_1>2\theta_2.$ In this case the fixed point $E_1=\left(\frac{2}{\theta_1}, 0\right)$ is a saddle while the fixed point $E_2=\left(0, \frac{2}{\theta_2}\right)$ is repelling. Thus, there exists a stable curve $\sigma(x_1)$ of $E_1$ and passing through $E_2$ that the trajectory beginning from below  $\sigma(x_1)$  converges to (0,0) while the trajectory starting above $\sigma(x_1)$ diverges to infinity. The case (c) is similar to case (b).
\end{proof}

\begin{figure}
  \centering
  \includegraphics[width=8cm]{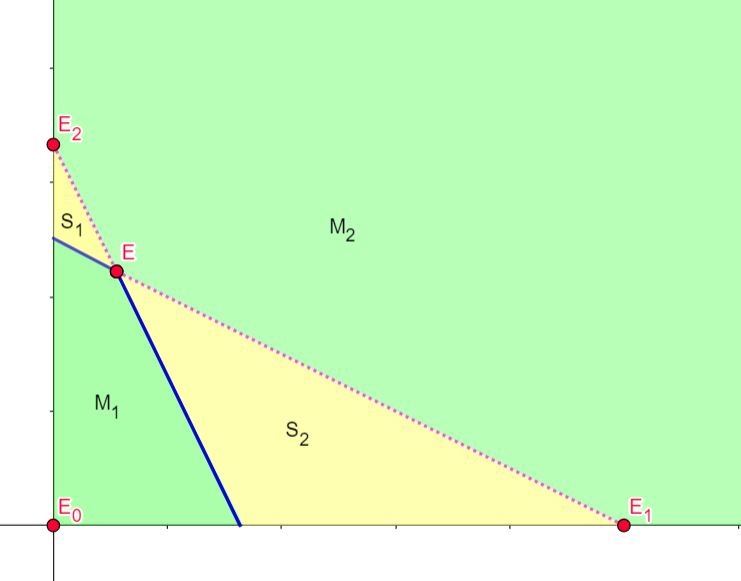}\\
  \caption{In the invariant set $M_1$ the trajectory converges to the origin, in $M_2$ it diverges to infinity.}\label{fig1}
\end{figure}

\begin{rem} For the cases $\theta_1 = 2\theta_2$ or $\theta_2 = 2\theta_1$, the behavior is similar to the cases (b) and (c) of Theorem \ref{thm2}; however, instead of a stable manifold, a center manifold emerges.
\end{rem}

We can generalize the Lemma \ref{lem2} as following:

\begin{prop}\label{prop13} The following sets
\[
\overline{M}_1=\left\{(x_1,x_2,...,x_n)\in \mathbb{R}_{+}^n: x_k+2\sum_{\substack{i=1 \\ i\neq k}}^{n}x_{i}\leq\frac{2}{\theta_k}, \ \ \forall k=1,2,...,n \right\},
\]

\[
\overline{M}_2=\left\{(x_1,x_2,...,x_n)\in \mathbb{R}_{+}^n: x_k+2\sum_{\substack{i=1 \\ i\neq k}}^{n}x_{i}\geq\frac{2}{\theta_k}, \ \ \forall k=1,2,...,n \right\},
\]
are invariant with respect to operator (\ref{h2}). Moreover,  for an initial point $\mathbf{x}^0=(x_1^{0}, x_2^{0},...,x_n^{0})\in \mathbb{R}_{+}^n$ (except fixed points)

\[
\lim_{n\rightarrow\infty}H^{n}\left(\mathbf{x}^0\right) =\left\{\begin{array}{lll}
&(0,0,...,0), ~~&{\rm if} \ \  \mathbf{x}^0\in \overline{M}_1\\
&\infty, ~~& {\rm if} \ \ \mathbf{x}^0\in \overline{M}_2
\end{array}\right.
\]
\end{prop}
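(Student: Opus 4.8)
The plan is to exploit the factorized form $x_k' = x_k\cdot\tfrac{\theta_k}{2}\,\sigma_k(x)$, where I abbreviate $\sigma_k(x):=x_k+2\sum_{i\neq k}x_i$, so that membership in $\overline{M}_1$ (resp. $\overline{M}_2$) is precisely the requirement $\sigma_k(x)\le 2/\theta_k$ (resp. $\ge 2/\theta_k$) for every $k$. For invariance of $\overline{M}_1$: if $x\in\overline{M}_1$ then $\tfrac{\theta_k}{2}\sigma_k(x)\le 1$ for each $k$, hence $0\le x_k'\le x_k$, i.e. $H(x)\le x$ componentwise; since each linear form $\sigma_k$ has non-negative coefficients, $\sigma_k(H(x))\le\sigma_k(x)\le 2/\theta_k$, so $H(x)\in\overline{M}_1$. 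The identical computation with $\tfrac{\theta_k}{2}\sigma_k(x)\ge 1$ gives $H(x)\ge x$ componentwise for $x\in\overline{M}_2$, whence $\sigma_k(H(x))\ge\sigma_k(x)\ge 2/\theta_k$, so $\overline{M}_2$ is invariant as well.

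Next, the asymptotics. Fix $\mathbf{x}^0\in\overline{M}_1$ that is not a fixed point and put $\mathbf{x}^{(m)}:=H^m(\mathbf{x}^0)$. By invariance and the first step, each coordinate sequence $(x_k^{(m)})_m$ is non-increasing and bounded below by $0$, hence converges to some $x_k^\ast\ge 0$; since $H$ is continuous, $\mathbf{x}^\ast=(x_1^\ast,\dots,x_n^\ast)$ is a fixed point. For $\mathbf{x}^0\in\overline{M}_2$ each coordinate is instead non-decreasing; if the trajectory were bounded, the same monotone-convergence argument would again produce a fixed-point limit $\mathbf{x}^\ast$, and $\mathbf{x}^\ast\neq 0$ because $\mathbf{x}^\ast\ge\mathbf{x}^0\neq 0$ (note the origin does not lie in $\overline{M}_2$, so $\mathbf{x}^0$ has a positive coordinate).

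The crucial point is to rule out $\mathbf{x}^\ast\neq 0$ in the $\overline{M}_1$ case and a bounded trajectory in the $\overline{M}_2$ case. Suppose $\mathbf{x}^\ast\neq 0$ and choose $j$ with $x_j^\ast>0$; dividing the $j$th fixed-point equation by $x_j^\ast$ gives $\sigma_j(\mathbf{x}^\ast)=2/\theta_j$. Now $\sigma_j(\mathbf{x}^{(m)})$ is a non-negative combination of coordinate sequences that are all monotone in the same direction, so it is itself monotone and converges to $2/\theta_j$; since a non-increasing (resp. non-decreasing) sequence never drops below (resp. rises above) its limit, the constraint $\sigma_j(\mathbf{x}^{(m)})\le 2/\theta_j$ (resp. $\ge 2/\theta_j$) forces $\sigma_j(\mathbf{x}^{(m)})\equiv 2/\theta_j$. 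Substituting this into $x_j^{(m+1)}=\tfrac{\theta_j}{2}x_j^{(m)}\sigma_j(\mathbf{x}^{(m)})$ yields $x_j^{(m)}\equiv x_j^{(0)}$, and then $\sigma_j(\mathbf{x}^{(m)})\equiv 2/\theta_j$ forces $\sum_{i\neq j}x_i^{(m)}$ to be constant; a constant sum of monotone sequences forces each of them to be constant. Hence $\mathbf{x}^{(m)}\equiv\mathbf{x}^0$, contradicting the choice of $\mathbf{x}^0$. Therefore $\mathbf{x}^\ast=(0,\dots,0)$ in the first case, and in the second case the trajectory is unbounded; being componentwise non-decreasing with non-negative entries, it then satisfies $H^m(\mathbf{x}^0)\to\infty$. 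I expect this last step — recognizing that the defining constraint squeezes the relevant monotone scalar sequence to a constant, and then propagating constancy to every coordinate — to be the only mildly delicate part; invariance and the monotone-convergence step are routine consequences of the factorized form of $H$ together with $\overline{M}_1,\overline{M}_2$ being cut out by linear forms with non-negative coefficients.
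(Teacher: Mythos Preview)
Your argument is correct and follows essentially the same route as the paper: the paper's proof simply refers back to case~(i) of Lemma~\ref{lem2}, where invariance is obtained from the implication $\sigma_k(x)\le 2/\theta_k\Rightarrow x_k'\le x_k\Rightarrow\sigma_k(x')\le\sigma_k(x)$, and the limit is identified via monotone convergence of every coordinate to a fixed point. Your treatment of the last step---squeezing $\sigma_j(\mathbf{x}^{(m)})$ to a constant and propagating constancy to all coordinates---is in fact more explicit than the paper's appeal to ``the positions of the fixed points,'' so if anything you have tightened the argument rather than departed from it.
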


\begin{proof} The proof is similar to that of case (i) in Lemma \ref{lem2}.

\end{proof}

\section{invariant manifolds}

\begin{prop}\label{prop25} Let $\theta_1<2\theta_2, \theta_2<2\theta_1.$ The unstable curve of a saddle fixed point $\overline{E}$ is the line
\[
x_2=\frac{2\theta_2-\theta_1}{2\theta_1-\theta_2}x_1
\]
which is passing through the origin and $\overline{E}$.
\end{prop}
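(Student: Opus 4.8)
The plan is to exhibit the line $\ell:\ x_2 = c\,x_1$, with $c := \dfrac{2\theta_2-\theta_1}{2\theta_1-\theta_2}$, as an $H$-invariant set, to check that it passes through $(0,0)$ and $\overline{E}$, and then to identify $\ell$ with the unstable manifold of $\overline{E}$ by analysing the one-dimensional dynamics of $H$ restricted to $\ell$.

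First I would record two elementary facts. The hypotheses $\theta_1<2\theta_2$ and $\theta_2<2\theta_1$ make both the numerator and the denominator of $c$ positive, so $\ell\cap\mathbb{R}_+^2$ is a ray in the first quadrant; and since $\overline{E}=\bigl(\tfrac{4\theta_1-2\theta_2}{3\theta_1\theta_2},\tfrac{4\theta_2-2\theta_1}{3\theta_1\theta_2}\bigr)$ has coordinate ratio $\overline{x}_2/\overline{x}_1 = \tfrac{4\theta_2-2\theta_1}{4\theta_1-2\theta_2}=c$, both $(0,0)$ and $\overline{E}$ lie on $\ell$.

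Next I would prove invariance by direct substitution. Plugging $x_2=c\,x_1$ into (\ref{h9}) gives $x_1'=\tfrac{\theta_1(1+2c)}{2}x_1^2$ and $x_2'=\tfrac{\theta_2 c(c+2)}{2}x_1^2$, so $(x_1',x_2')\in\ell$ precisely when $\theta_2(c+2)=\theta_1(1+2c)$, i.e. $c(\theta_2-2\theta_1)=\theta_1-2\theta_2$; this is exactly the defining equation of $c$, hence $\ell$ is invariant. Writing $a := \tfrac{\theta_1(1+2c)}{2} = \tfrac{3\theta_1\theta_2}{2(2\theta_1-\theta_2)}$, the restriction of $H$ to $\ell$, read in the coordinate $x_1\ge 0$, is the quadratic map $f(x_1)=a\,x_1^2$ with fixed points $x_1=0$ and $x_1=1/a=\tfrac{4\theta_1-2\theta_2}{3\theta_1\theta_2}=\overline{x}_1$. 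Since $f'(\overline{x}_1)=2a\cdot(1/a)=2>1$, the point $\overline{E}$ is a repeller for the dynamics on $\ell$; moreover, because the tangent direction of an invariant line through a fixed point is an eigenvector of the Jacobian there, it follows that $(1,c)$ is an eigenvector of $J(\overline{E})$ with eigenvalue $2$ (consistent with Proposition \ref{propn2}(d) and Theorem \ref{thm1}). As the other eigenvalue of $J(\overline{E})$ satisfies $|\lambda_-|<1$ (Lemma \ref{keylem} applied to (\ref{chareq})), the value $2$ is the unstable multiplier and $(1,c)$ spans the unstable eigenspace of $J(\overline{E})$.

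Finally, by the stable/unstable manifold theorem the unstable curve of the saddle $\overline{E}$ is the unique $H$-invariant curve through $\overline{E}$ tangent to the unstable eigenspace; since $\ell$ is an invariant curve through $\overline{E}$ with tangent direction $(1,c)$, it coincides with that unstable curve, and its portion in $\mathbb{R}_+^2$ is exactly the ray joining the origin (toward which $f$ contracts, since $f'(0)=0$) to $\overline{E}$ and beyond (where $f$ iterates to infinity). I expect the only subtle point to be the bookkeeping that pins $2$ down as the unstable rather than the stable multiplier and that the invariant line is the full unstable manifold and not a proper sub-arc; both are settled at once by the explicit identity $f'(\overline{x}_1)=2$ together with the saddle classification already established for $\overline{E}$.
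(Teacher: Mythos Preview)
Your proof is correct. The invariance computation is essentially the same as the paper's (the paper computes $x_2'/x_1'$ directly rather than substituting, but these are equivalent). The genuine difference is in how you certify that the line is the \emph{unstable} curve: the paper appeals to Lemma~\ref{lem2}, observing that on the segment of $\ell$ inside $M_1$ the orbit falls to the origin and on the portion inside $M_2$ it escapes to infinity, hence trajectories on $\ell$ move away from $\overline{E}$. You instead work locally: you write out the one-dimensional restriction $f(x_1)=ax_1^2$, read off $f'(\overline{x}_1)=2$, match this with the unstable multiplier of $J(\overline{E})$, and then invoke the uniqueness in the stable/unstable manifold theorem. Your route is more self-contained and makes explicit why $\ell$ is \emph{the} unstable manifold (not merely some invariant curve on which $\overline{E}$ repels), at the cost of quoting the local theory; the paper's route is quicker because it recycles the global dynamics already established, but it leaves the identification with the full unstable manifold implicit.
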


\begin{proof} First, we will show that the line $x_2=\frac{2\theta_2-\theta_1}{2\theta_1-\theta_2}x_1$ is an invariant. Let $\frac{x_2}{x_1}=\frac{2\theta_2-\theta_1}{2\theta_1-\theta_2}.$ Then
\[
\frac{x_{2}'}{x_{1}'}=\frac{\theta_2}{\theta_1}\cdot\frac{x_{2}}{x_{1}}\left(\frac{\frac{x_{2}}{x_{1}}+2}{1+\frac{2x_{2}}{x_{1}}}\right)=\frac{x_{2}}{x_{1}}=\frac{2\theta_2-\theta_1}{2\theta_1-\theta_2}.
\]
According to Lemma \ref{lem2}, the trajectory converges to the origin on the part of the line located in $M_1$, and on the other part it diverges to infinity, so the line is unstable with respect to the fixed point $\overline{E}.$
\end{proof}

Finding an exact form of a stable curve is very difficult (almost impossible), but we can find its tangent vector at the fixed point $\overline{E}.$

\begin{prop} The vector $\{1; -\frac{\theta_2}{\theta_1}\}$ is a tangent vector of a stable curve at the fixed point $\overline{E}.$
\end{prop}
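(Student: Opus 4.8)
The plan is to compute the contracting eigendirection of the linearization $DH(\overline{E})$ and then appeal to the stable manifold theorem. First I would recall from Proposition~\ref{propn2} that
\[
J(\overline{E})=\begin{bmatrix}\frac{2(\theta_1+\theta_2)}{3\theta_2} & \frac{4\theta_1-2\theta_2}{3\theta_2}\\ \frac{4\theta_2-2\theta_1}{3\theta_1} & \frac{2(\theta_1+\theta_2)}{3\theta_1}\end{bmatrix},
\]
so $\operatorname{tr}J(\overline{E})=\frac{2(\theta_1+\theta_2)^2}{3\theta_1\theta_2}$. By Theorem~\ref{thm1} one eigenvalue of $J(\overline{E})$ equals $2$; hence the other eigenvalue is $\lambda=\operatorname{tr}J(\overline{E})-2=\frac{2(\theta_1^2-\theta_1\theta_2+\theta_2^2)}{3\theta_1\theta_2}$. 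Since $\theta_1^2-\theta_1\theta_2+\theta_2^2>0$ we have $\lambda>0$, and $\lambda<1$ is equivalent to $(2\theta_1-\theta_2)(\theta_1-2\theta_2)<0$, which is exactly the standing assumption $\theta_1<2\theta_2,\ \theta_2<2\theta_1$. Thus $\lambda\in(0,1)$ is the contracting eigenvalue, and in particular $J(\overline{E})$ is nonsingular, so $H$ is a local diffeomorphism near $\overline{E}$.

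Next I would check that $v=\{1;-\tfrac{\theta_2}{\theta_1}\}$ spans the stable eigenspace, i.e. $J(\overline{E})v^{\top}=\lambda v^{\top}$. Substituting $v$ and clearing denominators, both coordinate equations collapse to the single identity $2\theta_1^2-2\theta_1\theta_2+2\theta_2^2=3\theta_1\theta_2\lambda$, which holds by the formula for $\lambda$ above; hence $E^{s}=\operatorname{span}\{1;-\tfrac{\theta_2}{\theta_1}\}$, while the eigenvalue-$2$ eigenspace is $\operatorname{span}\{2\theta_1-\theta_2;\,2\theta_2-\theta_1\}$, consistent with Proposition~\ref{prop25}. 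Since $\lambda\neq2$, these eigenlines are distinct and transverse, so $E^{s}$ is uniquely determined.

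Finally, since $H$ is a smooth local diffeomorphism at the hyperbolic saddle $\overline{E}$, the stable manifold theorem provides a $C^{1}$ local stable manifold $W^{s}_{\mathrm{loc}}(\overline{E})$ tangent to $E^{s}$ at $\overline{E}$; by uniqueness it coincides near $\overline{E}$ with the stable curve $\gamma$ of Theorem~\ref{thm2}(a), because by Lemma~\ref{lem2} every orbit not on $\gamma$ is eventually monotone and leaves each small neighborhood of $\overline{E}$, so $\gamma$ is precisely the set of points whose forward orbit stays near and converges to $\overline{E}$. Therefore the tangent vector of the stable curve at $\overline{E}$ is $\{1;-\tfrac{\theta_2}{\theta_1}\}$. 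The argument is essentially routine; the only points needing care are the sign check that $\lambda$ lies in $(0,1)$ under the hypotheses (so that $v$ really spans the \emph{stable}, not the unstable, direction) and the identification of the global stable curve with the local stable manifold near $\overline{E}$.
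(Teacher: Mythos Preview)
Your argument is correct and follows essentially the same approach as the paper: find the second eigenvalue (you use $\operatorname{tr}J(\overline{E})-2$, the paper cites the characteristic polynomial, but the result is the same $\lambda_2=\tfrac{2(\theta_1^2-\theta_1\theta_2+\theta_2^2)}{3\theta_1\theta_2}$) and then identify $\{1;-\theta_2/\theta_1\}$ as the corresponding eigenvector. Your additional care---the explicit check that $\lambda\in(0,1)$ under the standing hypotheses and the invocation of the stable manifold theorem to pass from eigendirection to tangent of the stable curve---fills in details the paper leaves implicit, but does not change the route.
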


\begin{proof} According to Theorem \ref{thm1}, one eigenvalue of a Jacobian at $\overline{E}$ is $\lambda_1=2$. Using this and characteristic polynomial (\ref{chareq}), we can find other eigenvalue $\lambda_2=\frac{2(\theta_1^2+\theta_2^2-\theta_1\theta_2)}{3\theta_1\theta_2}.$ Thus, the eigenvector corresponding to $\lambda_1=2$ is $\left\{\frac{\theta_2-2\theta_1}{\theta_1-2\theta_2}; 1\right\}$ which confirms the Proposition \ref{prop25}.  Similarly, the eigenvector corresponding to $\lambda_2$ is $\{1; -\frac{\theta_2}{\theta_1}\}.$
\end{proof}

Similarly, we can find tangent vectors of the invariant curves $\sigma(x), \delta(x)$ at the fixed points $E_1$ and $E_2$ respectively.

For the concrete values $\theta_1=0,4$ and $\theta_2=0,6$ ($\theta_1<2\theta_2$ and $\theta_2<2\theta_1$ ) we get the following phase portrait for the global dynamics of the operator (\ref{h9}) as in Fig \ref{fig2}. Using approximation, we found polynomial form of the stable curve as following: $\gamma(x)=-0.0046x^5+0.069x^4-0.3987x^3+1.222x^2-2.5674x+3.3333.$

\begin{figure}
  \centering
  \includegraphics[width=9cm]{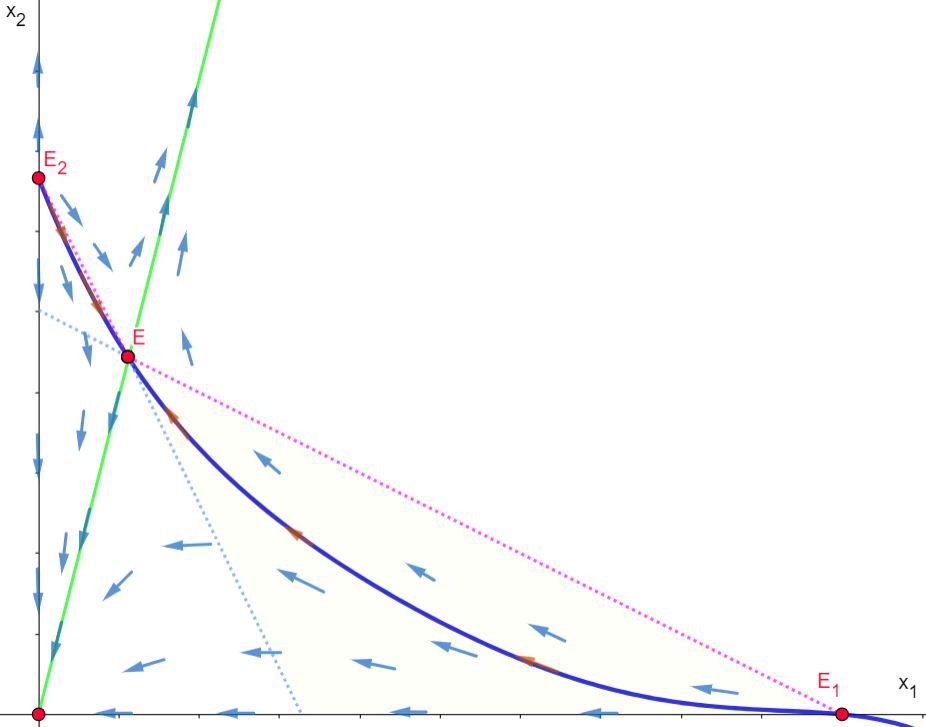}\\
  \caption{Case $\theta_1<2\theta_2, \, \theta_2<2\theta_1.$ The blue curve represents a stable curve, while the green line indicates an unstable curve.}\label{fig2}
\end{figure}

For the values $\theta_1=0,8$ and $\theta_2=0,2$  ($\theta_1>2\theta_2$) we give phase portrait for the global dynamics of the operator (\ref{h9}) as in Fig. \ref{fig3}.

\begin{figure}
  \centering
  \includegraphics[width=5cm]{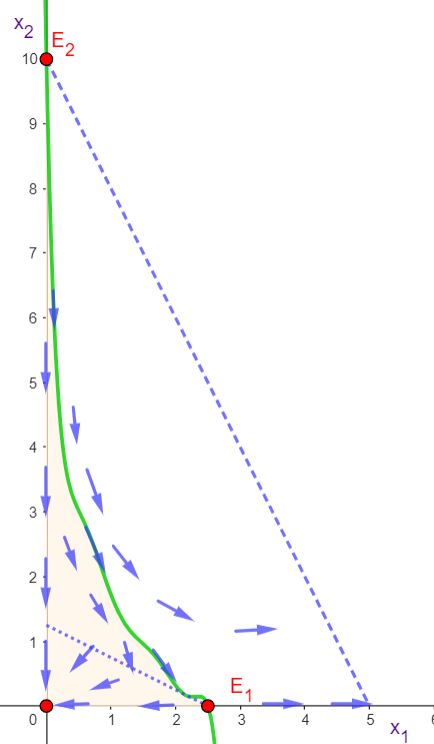}\\
  \caption{Case $\theta_1>2\theta_2.$ The green is a stable curve for the fixed point $E_1$.}\label{fig3}
\end{figure}

%
%

 \begin{prop}\label{prop15} For any $n\geq2,$ the unstable manifold of a non-zero saddle fixed point $\overline{E}$ is the $n-$ dimensional line
\[
L_n: \ \ \frac{x_1}{\overline{x}_1}=\frac{x_2}{\overline{x}_2}=\ldots=\frac{x_n}{\overline{x}_n}
\]
which is passing through the origin and $\overline{E}$, where the coordinates of $\overline{E}$ are defined as (\ref{h6}).
\end{prop}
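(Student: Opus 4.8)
The plan is to mirror the two-dimensional argument of Proposition~\ref{prop25}: exhibit $L_n$ as an invariant set, read off the scalar dynamics on it, and then identify it with the unstable manifold of $\overline{E}$.

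First I would parametrize the nonnegative part of $L_n$ by the ray $\{\,t\overline{E}=(t\overline{x}_1,\dots,t\overline{x}_n):t\ge 0\,\}$ (legitimate since all $\overline{x}_i>0$), which visibly passes through the origin ($t=0$) and through $\overline{E}$ ($t=1$). Applying $H$ and using that $\overline{E}$ solves~(\ref{h5}), i.e. $\overline{x}_k+2\sum_{i\neq k}\overline{x}_i=\tfrac{2}{\theta_k}$, one computes
\[
\bigl(H(t\overline{E})\bigr)_k=\frac{\theta_k\,t\overline{x}_k}{2}\Bigl(t\overline{x}_k+2t\!\!\sum_{i\neq k}\overline{x}_i\Bigr)=\frac{\theta_k t^2\overline{x}_k}{2}\cdot\frac{2}{\theta_k}=t^2\overline{x}_k,
\]
so $H(t\overline{E})=t^2\overline{E}$. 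Hence $L_n$ is invariant and the induced dynamics is conjugate to the one-dimensional map $t\mapsto t^2$, for which $t=0$ is attracting, $t=1$ (the point $\overline{E}$) is repelling, and orbits with $t>1$ escape.

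Next I would translate this into the statement. A backward $L_n$-orbit of $t\overline{E}$ is $t^{1/2^m}\overline{E}\to\overline{E}$, so every point of $L_n$ lies on the unstable manifold of $\overline{E}$, giving the inclusion $L_n\subseteq$ (unstable manifold). Moreover, for $0\le t<1$ the point $t\overline{E}$ satisfies $t\overline{x}_k+2t\sum_{i\neq k}\overline{x}_i=t\cdot\tfrac{2}{\theta_k}\le\tfrac{2}{\theta_k}$, hence $t\overline{E}\in\overline{M}_1$ and $H^m(t\overline{E})\to(0,\dots,0)$ by Proposition~\ref{prop13}; for $t>1$ one has $t\overline{E}\in\overline{M}_2$, so $H^m(t\overline{E})\to\infty$. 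This records where the two branches of $L_n\setminus\{\overline{E}\}$ go. Finally, differentiating $H(t\overline{E})=t^2\overline{E}$ at $t=1$ yields $J(\overline{E})\,\overline{E}=2\overline{E}$, so the direction of $L_n$ at $\overline{E}$ spans the eigenspace for the eigenvalue $2$ of Theorem~\ref{thm1}; thus $L_n$ is the invariant curve tangent at $\overline{E}$ to the unstable direction.

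The subtle point — the only place that is not pure bookkeeping — is the final identification: one needs that in the saddle regime $2$ is the \emph{only} eigenvalue of $J(\overline{E})$ outside the closed unit disk, so that the unstable manifold is one-dimensional, and then the uniqueness/maximality of the (local) unstable manifold together with invariance of $L_n$ forces the reverse inclusion, so that $L_n$ coincides with the unstable manifold rather than being a proper subset of a larger unstable set. Granting the spectral information already recorded for $\overline{E}$ in the saddle case, the remaining steps are exactly the computations sketched above.
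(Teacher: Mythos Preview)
Your approach is essentially the same as the paper's: prove $L_n$ is $H$-invariant and then invoke Proposition~\ref{prop13} to see that on $L_n$ orbits below $\overline{E}$ fall into $\overline{M}_1$ and tend to the origin while those above lie in $\overline{M}_2$ and escape. The paper checks invariance by showing the ratios $x_p/x_q$ are preserved, whereas your parametrization $t\overline{E}\mapsto t^2\overline{E}$ is a tidier packaging of the same computation and, as a bonus, immediately yields the relation $J(\overline{E})\overline{E}=2\overline{E}$ linking $L_n$ to the eigenvalue~$2$ of Theorem~\ref{thm1}. Your ``subtle point'' about needing $2$ to be the unique eigenvalue outside the unit disk (so that the unstable manifold is one-dimensional and hence equals $L_n$) is well taken; the paper does not address this either and simply declares $L_n$ to be the unstable manifold once invariance and the repelling behaviour along it are established.
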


\begin{proof} Let's consider first  $\frac{x_{2}'}{x_{1}'}:$
\[
\frac{x_{2}'}{x_{1}'}=\frac{\theta_2}{\theta_1}\cdot\frac{x_{2}}{x_{1}}\left(\frac{\overline{x}_2+2\sum_{\substack{i=1 \\ i\neq 2}}^{n}\overline{x}_{i}}{\overline{x}_1+2\sum_{\substack{i=2}}^{n}\overline{x}_{i}}\right).
\]
Using (\ref{h5}) we obtain that $\frac{x_{2}'}{x_{1}'}=\frac{x_{2}}{x_{1}}=\frac{\overline{x}_{2}}{\overline{x}_{1}}.$ Similarly, for any $p,q=1,2,...,n$  ($ p\neq q$) we have $\frac{x_{p}'}{x_{q}'}=\frac{x_{p}}{x_{q}}=\frac{\overline{x}_{p}}{\overline{x}_{q}}.$
Thus, the line $L_n$ is an invariant. Furthermore, according to Proposition \ref{prop13}, the trajectory converges to the origin on the part of the line located in $\overline{M}_1$, and it diverges to infinity in $\overline{M}_2$, so the line is unstable with respect to the fixed point $\overline{E}.$
\end{proof}

 \section*{Acknowledgements}
The first author thanks to the International Mathematical Union (IMU-Simons Research Fellowship Program) for
providing financial support of his visit to the Paul Valery University, Montpellier, France. We also thank professor Richard Varro for his helpful discussions.

\address{ S.K. Shoyimardonov$^{a,b}$
\email{\textsc{\footnotesize{}shoyimardonov@inbox.ru}}

\address{ U.A. Rozikov$^{a,b,c}$
\email{\textsc{\footnotesize{}rozikovu@yandex.ru}}
\begin{itemize}
		\item[$^a$] V.I.Romanovskiy Institute of Mathematics,  Uzbekistan Academy of Sciences, 9, Universitet str., 100174, Tashkent, Uzbekistan;
		\item[$^b$]  National University of Uzbekistan named after Mirzo Ulugbek,  4, Universitet str., 100174, Tashkent, Uzbekistan.
		\item[$^c$] Karshi State University, 17, Kuchabag str., 180119, Karshi, Uzbekistan.
\end{itemize}}

\end{document}